\theoremstyle{plain}
\newtheorem{theorem}{Theorem}[section]
\newtheorem{lemma}[theorem]{Lemma}
\theoremstyle{definition}
\theoremstyle{remark}
\newcommand{\eg}{{\em e.g.,} }
\newcommand{\ie}{{\em i.e.,} }
\begin{document}
\begin{frontmatter}
\title{Elo Ratings in the Presense of Intransitivity}
\runtitle{Elo Ratings in the Presense of Intransitivity}

\begin{aug}
\author[A]{\fnms{Adam H.}~\snm{Hamilton}\ead[label=e1]{adam.h.hamilton@adelaide.edu.au}\orcid{0000-0002-1589-8816}},
\author[A]{\fnms{Anna}~\snm{Kalenkova}
}
\and
\author[A]{\fnms{Matthew}~\snm{Roughan}}
\address[A]{Mathematics and Computer Sciences, The
University of Adelaide\printead[presep={,\ }]{e1}}

\runauthor{A. Hamilton et al.}
\end{aug}

\begin{abstract}
    This paper studies how the Elo rating system behaves when the underlying modelling assumptions are not met. The Elo rating system is a popular statistical technique used to analyse pairwise comparison data. It is perhaps best known for rating chess players. The crucial assumption behind the Elo rating system is that the probability of which players wins a chess game depends primarily on a real-valued parameter, which quantifies the player's skill. This implicitly assumes that the binary relation ``more likely to win against'' is transitive. This paper studies how the Elo rating system behaves when this assumption is relaxed. First, we prove that once the assumption of transitivity is relaxed, the Elo ratings exhibit the undesirable property that estimated ratings are dependent on who plays who. Second, we prove that even when the assumption of transitivity is relaxed, there is a unique point where the expected change of Elo ratings at that point is zero. The point represents the maximum likelihood estimator of the Elo ratings, given the observed data. Finally, we introduce a statistic that can be used to measure the intransitivity present in a game. We derive this measurement, and demonstrate on simulated data that it satisfies some useful sanity tests. 
\end{abstract}

\begin{keyword}[class=MSC]
\kwd[Primary]{60G99}
\kwd{00X00}
\kwd[; secondary ]{91A99}
\end{keyword}

\begin{keyword}
\kwd{Elo Rating System}
\kwd{Stochastic Process}
\kwd{Parameter Estimation}
\kwd{Bradley Terry Model}
\end{keyword}

\end{frontmatter}

\section{Introduction}

The Bradley Terry model is a statistical model used to estimate the probability of the outcome of a pairwise comparison. There exist closed form maximum likelihood estimators of the model parameters \citep{BradleyTerryJSSv012i01}. In practice, these parameters are often estimated using stochastic gradient descent in a process called the \emph{Elo rating system} \citep{SGDSzczecinski+2022+1+14,SGDszczecinski2019understanding,SGDSzczecinskiDjebbi+2020+211+220,SGD10.1007/978-3-030-29736-7_16}. 

The Elo rating system is used to provide numerical ratings for players in many competitive contexts. Elo~\citep{elo2008rating} originally used the system to rate chess players but it has since been applied across a large range of games including many sports and more recently in e-sports, and it has influenced many more recent rating systems, \eg the Glicko system \citep{Glicko}. 

Ratings are also of interest to the machine learning (ML) community which is making frequent use of mathematical rating techniques \citep{spinningTops10.5555/3495724.3497187, balduzziNEURIPS2018_cdf1035c}, for instance in training neural networks.

Most rating systems assume that a player's skill can be encoded by a single number. One implication is that rating systems usually assume skill is transitive. If Alice defeats Bob, and Bob defeats Charlie, then Alice is likely to win against Charlie. But there are many games with some element of non-transitivity, Rock-Paper-Scissors being the canonical example. 

This paper discusses how the Elo rating system behaves in the presence of intransitivity. We show that even when the assumptions underlying the Elo rating system do not hold, there still exists a unique fixed point in the Elo rating system. That is, given enough outcomes of a set of games, the Elo system will find ratings for the players. This is good because the Elo system is used in the real world where intransitivity is present.

However, when intransitivity is present, Elo exhibits the undesirable property that estimated ratings are dependent on who plays who. More precisely, the long-term outcome of the ratings system doesn't just depend on the skill of the players, it depends on the probabilities with which players are selected to play one another. The parameters to the Elo rating model have often been likened to the Hodge rank of the expected payoff matrix \citep{balduzziNEURIPS2018_cdf1035c}. This dependence on player selection introduces a bias into estimating the expected payoff matrix's Hodge rank. 

This is surprising. Obviously, the ratings outcomes depend on the particular games played but an implicit belief of those who use Elo and its descendants is that, in the long-term, provided diverse enough games are played, then Elo ratings will converge to meaningful estimates of players' skills. However, if the scores depend on the distribution of games played, then the scores lose meaning. The implication is that ratings in many fields, \eg sports, e-sports and in training ML algorithms, might not be useful measures. It is hence important to understand why this phenomenon arises so that it can be quantified. 

Elo ratings can be viewed as a discrete-time Markov chain (DTMC), presuming the outcomes of the games are determined randomly (with probabilities depending on the players' skills). Then the set of Elo ratings for each player forms the Markov chain's state space. With every victory and loss the ratings change depending only on the current ratings and the outcome, with no memory.

Markov Chain Monte Carlo methods such as this abound in statistics \citep{MCMCbookalma9928423121801811}. The states visited by the Elo rating system DTMC will tend to be close to the maximum likelihood estimators. 

However, this DTMC is hard to analyse because the state space is continuous but not at all smooth. Transitions are binary and result in either an increase or decrease in the players' ratings but the size of these changes is dependent on the current state through a non-linear logistic relationship. As a result, we cannot show that this Markov chain has a stationary distribution, even when the assumptions underlying Elo are true, let alone when they are violated.

However, though we cannot obtain a closed-form expression for the limiting distribution of the DTMC, we can still determine the long-term behaviour of the Elo rating system. In this paper, we do so and use the results to understand how intransitivity affects Elo ratings. 

The results also provide a metric for the degree of non-transitivity in any given game and set of players.

In summary the contributions\footnote{This paper is based on the PhD dissertation \cite{HamiltonThesis2023}.} of this work are:
\begin{itemize}
    \item We characterise the long-term behaviour of the Elo rating system and show that a weaker notion of long-term behaviour (than stationarity) is needed. 
    
    \item We provide a metric for the degree of non-transitivity in a game and set of players through the {\em ground-truth advantage matrix} $A$, which expresses (in ratings-free terms) the advantage/disadvantage of each player over the others. 
    
    \item We prove in \autoref{section:MultipleFinalScores} that when $A$ is not a {\em strongly-transitive additive comparison matrix}, then the long-term behaviour of the Elo rating system depends on the probability with which players are selected to play one another. 
    
    \item We show in \autoref{section:existenceanduniqueness} and \autoref{sec:uniqueSolution}, that if the ground-truth advantage matrix and the probability with which players are selected remain constant then there exists a unique {\em final Elo rating}. 
 
\end{itemize}

\section{Background and Related Work}

This paper falls into the now large field of statistical rating and ranking. For introductions see \cite{WhosNumber1alma9928213834001811} and \cite{elo2008rating}. There are a large number of approaches to estimating ratings in statistics and data science, but within these Elo and its variants are perhaps the most commonly used in real ratings.

The \textit{Elo rating system} is a method of estimating the skill of players in a 2-player zero-sum game. It is well known for its use in Chess \citep{Eloratingschesscom} but has been applied to many other scenarios \cite{SoccerElo,TennisElo,EloCoevolutionary10.1145/3449726.3463193}. The Elo rating system works by assuming that every player's skill can be quantified with a scalar number $r$. When two players $i$ and $j$ play one another, the probability that player $i$ defeats player $j$ is given by 
$$
    p_{ij} = \sigma(r_i - r_j),
$$
where $\sigma$ is a sigmoid function (typically the logistic function). Then the update rule for ratings given match outcomes implicitly regresses on this relationship.
Thus the Elo rating system works both as a model of game outcomes and as a rating system estimation algorithm. There is a large literature concerning the Elo system (for example see \cite{balduzziNEURIPS2018_cdf1035c, DavidAldousEloNeglected10.1214/17-STS628,Eloratingschesscom,EloCoevolutionary10.1145/3449726.3463193}) though much of that is not directly relevant here as little of this literature concerns the transitivity assumption implicit in this model.  

The Elo rating system itself is a well-studied mathematical object. There are numerous statistical analyses that use the Elo rating system in the field of sports analysis \citep{tennisvaughan2021well, TennisElo, EloHockeyRatings, chessdoi:10.1080/09332480.2020.1820249, ChessNumbers, SoccerElo}, as well as in more novel applications such as assessing fabric quality \citep{fabricTSANG2016378}, comparing cyber-security methodologies \citep{EloCoevolutionary10.1145/3449726.3463193, PietersWolter2012Amit}, and as a method of determining whether a game is won due to luck or skill \citep{luckskillRePEc:eee:eecrev:v:127:y:2020:i:c:s0014292120301045}. 

Some researchers have studied the Elo rating system itself, using techniques from stochastic processes to obtain insights as to how it behaves \citep{SGDszczecinski2019understanding, during2019boltzmann, SGDSzczecinskiDjebbi+2020+211+220}. For instance, \cite{SGD10.1007/978-3-030-29736-7_16} studies the problem of optimising the parameters used in the Elo rating system, and the bias variance trade-off of the resulting estimators.  

Many researchers have noted the limitations of the Elo rating system and have sought to improve it. For instance, \cite{Glicko, Ingram+2021+203+219, extensionszczecinski2023simplified, HUA20241152, during2018kinetic} view the Elo rating system as a hidden Markov model and treat the variances of each player's scores as a variable. Other methods seek to incorporate more information than just wins and losses \citep{SGDSzczecinski+2022+1+14, extensionapp14178023, 10.1145/3442381.3450091, KOVALCHIK20201329}. 

However, underlying all this work, most authors assume that the modelling assumptions (in particular transitivity) of the Elo rating system are satisfied. \cite{balduzziNEURIPS2018_cdf1035c}, \cite{QuentinBertrandElo}, and \cite{spinningTops10.5555/3495724.3497187} are exceptions, and consider the effects of intransitivity on Elo ratings. However, they differ from our research in that they use intransitivity to motivate new rating systems whereas we study the impact of intransitivity on the Elo rating system itself. 

The problems of intransitivity in rating systems are of interest to the machine learning community who are making frequent use of mathematical rating techniques \citep{spinningTops10.5555/3495724.3497187,balduzziNEURIPS2018_cdf1035c, DuellingBanditsc8ec9cf022ea4755bed8a7c728b258d6}. In particular, \cite{tran2013pairwise} shows that different rating systems can give vastly different results on the same pairwise comparison matrix when intransitivity is present, which is a key motivation for our study. 
 
Our definition of final Elo score matches the one introduced in \cite{balduzziNEURIPS2018_cdf1035c}. That work claims without proof that the Elo rating system fails when intransitivity is present, the authors do not provide a precise definition of failure. Here we expand on this showing that it succeeds in one sense but fails in another. 

More recently \cite{bolsinova2024keeping} noted that when the players are adaptively selected based on their current Elo ratings the ratings may not converge. Our paper explains why this occurs: it is linked to the dependence between the method with which comparisons are selected, and the long-range behaviour of the Elo rating system.

Our approach starts from the study of pairwise comparison matrices \cite{PCMatriceskoczkodaj2016important, DumoulinPairwisePreferences} and functions that map these matrices into vectors, \ie that convert a $m \times m$ comparison of $m$ players into a single rating for each player. 

The notion of intransitivity in pairwise comparison matrices was made mathematically rigorous in \cite{jiang2011statistical}, where the authors develop the area of \textit{combinatorial Hodge theory}, of which we make use in this paper. In particular ideas around Hodge rank \citep{xu2012hodgerank, SizemoreRobertK2013HAch, tran2013pairwise, jiang2011statistical} are useful. We use notions from this domain including the following: 

The \textit{combinatorial gradient operator} is a function $grad:\mathbb{R}^m \rightarrow \mathbb{R}^{m \times m}$ that maps an $m$-dimensional vector $v$ into a matrix $M$ such that $M_{ij} = v_i - v_j$.

A \textit{strongly transitive additive comparison matrix} (STACM) is a matrix that is an element of the image of $grad$. The set of $m \times m$ STACMs form an $(m-1)$-dimensional subspace of $\mathbb{R}^{m \times m}$.

The \emph{combinatorial divergence function} $div:\mathbb{R}^{m\times m}\rightarrow \mathbb{R}^{m}$ is a mapping from matrices to vectors given by $div(A) = (A\cdot 1)/m$, \ie $div(A)$ returns a vector whose entries are the arithmetic mean of the corresponding row of $A$. 
    
A \textit{cyclic matrix} is a matrix that belongs to the image of the rotation operator. The rotation operator is a function $rot: \mathbb{R}^{m \times m} \rightarrow \mathbb{R}^{m \times m}$ where $rot(A)_{ij} = \frac{1}{n}\sum_{k=1}^{m} A_{ij}+A_{jk}+A_{ki}$. The set of cyclic matrices form an $(m-2)(m-1)/2$-dimensional vector space over $\mathbb{R}^{m\times m}$.
    
The set of skew-symmetric matrices $X = - X^T$ form an $m(m-1)/2$-dimensional subspace of $\mathbb{R}^{m \times m}$ and can be decomposed into an orthogonal sum of a STACM and a cyclic matrix \cite{balduzziNEURIPS2018_cdf1035c}. That is, for any skew-symmetric matrix $X$, we can always find a unique STACM $S$ and unique cyclic matrix $A$ such that 
\[ X = S + A. \]
This is called the \textit{Hodge decomposition} of a skew-symmetric matrix. 

\section{Mathematical formulation}
\subsection{Terminology}
Here we state the questions we are trying to answer in a mathematically precise manner. We work with a finite set of $m$ players, where the word ``player" is used to denote any entity that can compete (\eg a human player, or an ML solution).
An \textit{Elo rating} is a vector, $r\in \mathbb{R}^m$, that assigns a real number to each player. The $i$th entry $r_i$ is called the $i$th player's \textit{Elo rating}. 

The \textit{expected payoff matrix} for a game and set of players is a matrix $P \in \mathbb{R}^{m\times m}$, where the entry $P_{ij} \in [0,1]$, represents the probability that player $i$ will defeat player $j$. In this paper, we do not consider draws, so $P_{ij} + P_{ji} = 1$. In the Elo rating system, we do not directly estimate the expected payoff matrix. Instead, we work with a transformed version called the \textit{advantage matrix}. This matrix is given by $A = \sigma^{-1}(P)$, where $\sigma$ is a sigmoid function -- usually the logistic function -- applied element-wise to $P$.  

The Elo rating system assumes that the probabilities of victory are distributed according to the model above, a variant of the Bradley-Terry model \citep{BradleyTerryJSSv012i01}. As players win or lose their ratings are re-estimated by transferring a proportion of their Elo points from one to one another.
Victories that are unexpected under the model result in larger transfers. 

Underlying the model is an advantage matrix, which the Elo rating system assumes to be a STACM, that best explains the observed pairwise comparisons. In the Elo model, the matrix is formed from the combinatorial gradient operator (defined earlier) applied to the rating vector. However, we note that in the case of intransitivity, the idea of estimating the advantage matrix is still valid, though it is no longer a STACM. 

We also define the \textit{selection matrix} $Q\in [0,1]^{m\times m}$ whose entries are the probability that a pair of players will be selected to play one another. It is presumed that such match-ups are randomly chosen according to this matrix, but it is not required that selection be truly random (sequential, weighted round-robin would suffice, for instance). In our work $Q$ is constant, though the results below provide some explanation as to why adaptive choices, \ie modifying $Q$ in response to outcomes\footnote{Adapting match pairings in response to outcomes is not uncommon. One of the main uses for ratings systems is to assign players under the assumption that matches between more evenly rated players will be more interesting for both players and observers.}, can result in non-convergent ratings as in \cite{bolsinova2024keeping}.

The matrix $Q$ is symmetric since $i$ and $j$ playing one another is the same as $j$ and $i$ playing. We also have that $ Q_{ij} \geq 0 $, $Q_{ii} = 0$, and $\sum_{i,j=1}^m Q_{ij} = 2$, since $Q$ is effectively two identical copies of the same probability mass function. Typically, we would also assume that $Q$ is not decomposable into blocks that don't communicate (more on this assumption will follow).

The core questions of this paper are:
\begin{itemize}
    \item ``Under what circumstances, \ie for what matrices $P$ and $Q$, does Elo converge to a unique `final' rating?"

    \item ``Where it exists, how does this final rating depend on the matrices $P$ and $Q$, where naively, one hopes that there is no dependence on $Q$?"
    
\end{itemize}
Much of this work is specific to the Elo rating system in particular, but the fundamental idea of understanding the impact of intransitivity on scalar ratings is likely to generalise to Elo's descendants and other (scalar) rating systems that use iterative update rules. 

\subsection{The Stability Equation}

If we assume that $P$ and $Q$ are fixed and that the outcome of every game is statistically independent (conditioned on the players' abilities), then the Elo scores of every player form a discrete-time Markov chain called the \textit{Elo Markov chain}. The Elo ratings of every player at time $t$ are given by the vector $r^{(t)}$. We make the arbitrary choice that every player's rating is initialised at zero, so this DTMC begins at the origin. The long-term behaviour of the Elo rating system can be understood in terms of the limiting distribution of this DTMC. 

But this characterisation is not without difficulties. We must take great care when defining the state space of the Elo Markov chain. Because each iteration involves one player giving Elo points to another, the sum of Elo ratings stays constant, the Elo Markov chain is restricted to the subspace $\sum r_i = 0$ which is isomorphic to $\mathbb{R}^{m-1}$. However, this subspace is still a super-set of the Elo Markov chain's state space. 

At any given time-step, we select a pair of players using $Q$, and determine the winner using $P$. This means that there are only a finite number of possible values for the vector of Elo ratings at the next time step, $r^{(t+1)}| r^{(t)}$. Since the next vector of Elo ratings can only take one of finitely many values, the state space of the Elo Markov chain is a countable subset of $\mathbb{R}^{m-1}$. Furthermore, it is an open question if any states in this Markov chain are even recurrent. Since we do not know if the Elo Markov chain has any recurrent states, we do not even know if it even has a limiting distribution, let alone have a means to calculate that distribution if it does exist. 

\subsubsection{Long-Range Behaviour}
Since we cannot calculate an explicit limiting distribution of the Elo Markov chain, we focus on a weaker notion of long-term behaviour. Whenever two players Alice and Bob play one another, The player's ratings are updated according to the equation:
$$
r_a^{(t+1)} = r_a^{(t)} + \eta\big(S_a - \sigma(r_a^{(t)} - r_b^{(t)})\big), 
$$
\noindent for Alice, and
$$
     r_b^{(t+1)} = r_b^{(t)} - \eta\big(S_a - \sigma(r_a^{(t)} - r_b^{(t)})\big), 
$$
for Bob, where $S_a$ is a Bernoulli random variable that equals 1 when Alice wins against Bob, and $\eta >0$ is an arbitrary gain parameter. The change in Elo ratings is a random variable dependent on the outcome of the game between Alice and Bob. The expected change in Elo ratings is given~\citep{DavidAldousEloNeglected10.1214/17-STS628} by 
\begin{equation}
    \label{eqn:expected change in Elo score two players}
    r_a^{(t+1)} = r_a^{(t)} + \eta\big(P_{ab} - \sigma(r_a^{(t)} - r_b^{(t)})\big), 
\end{equation}
\noindent for Alice, and
\begin{equation}
    \label{eqn:expected change in Elo score other player}
     r_b^{(t+1)} = r_b^{(t)} - \eta\big(P_{ab} - \sigma(r_a^{(t)} - r_b^{(t)})\big), 
\end{equation}.

Whenever Alice's rating is too high, then $\eta(P_{ab} - \sigma(r_a^{(t)} - r_b^{(t)}))$ is negative and if Alice's rating is too low, then $\eta(P_{ab} - \sigma(r_a^{(t)} - r_b^{(t)}))$ is positive. This means that, on average, Alice's and Bob's ratings will spend most of their time around values $r_a^{(\cdot)}$ and $r_b^{(\cdot)}$,  where $r_a^{(\cdot)} - r_b^{(\cdot)} = \sigma^{-1}(P_{ab})$. 

This notion of ``spending time close to a value" is what we mean when we talk about the long-term behaviour of the Elo Markov chain. When the quantity $\sigma(r_a^{(t)} - r_b^{(t)})$ is larger than $P_{ab}$, it is expected to decrease. Similarly, when $\sigma(r_a^{(t)} - r_b^{(t)})$ is smaller than $P_{ab}$ it is expected to increase. 

There is only one possible value of $r_a^{(t)}- r_b^{(t)}$ for which the Elo Markov chain is neither expected to increase nor decrease. This is where $r_a^{(t)} - r_b^{(t)} = \sigma^{-1}(P_{ab})$. We say that this point is the \textit{final Elo score} for the population $\{i,j\}$. Due to the nature of the Elo Markov chain, we cannot guarantee that this exact set of values is ever achieved or even if such values are possible, but we know that the Elo Markov chain is expected to move towards this value and in the long-term, the Markov chain will be expected to remain close to the final Elo score. This result is enough for practical purposes.

We extend this notion of long-term behaviour to situations with $m$ players in the population. We say that a vector $r$ is a \textit{final Elo score} of the Elo Markov chain, for matrices $P$ and $Q$ iff
$$
\mathbb{E}[r^{(t+1)}|r^{(t)}] = r^{(t)}.
$$
\textit{i.e.,} the conditionally expected value of the next iteration of the Elo Markov chain at $r$ is unchanged\footnote{This property resembles the martingale property, but note that it only holds for one set of values, not the entire space.}.

The expected change in the Elo Markov chain is in general given by
\begin{eqnarray}
    \label{eqn: stability equation before Hodge}
  \mathbb{E}[r^{(t+1)} - r^{(t)}|r^{(t)}] 
  & = & \sum_{j} Q_{ij}\eta\big(P_{ij} - \sigma(r^{(t)}_i - r^{(t)}_j)\big) \\
  & = & \eta \, \text{div}\bigg(Q\odot \big(P - \sigma(\text{grad}(r^{(t)}))\big)\bigg), \label{eqn:proto stability equation}
\end{eqnarray}
in  terms of combinatorial Hodge theory where $\odot$ is the element-wise (Hadamard) product of two matrices.
We solve for the final Elo score of the Markov chain by setting $\mathbb{E} [r^{(t+1)} - r^{(t)}|r^{(t)}]$ to zero and rearranging the expression. This rearranged equation can be written independent of the gain $\eta$:
\begin{equation}
    \label{eqn: stability equation}
    \text{div}(Q\odot P) = \text{div}\big(Q\odot \sigma(\text{grad}(r))\big).
\end{equation}
We call \autoref{eqn: stability equation} the \textit{stability equation}. 

The stability equation is a necessary condition to determine whether a set of Elo ratings is expected to change, \ie whether it is a final Elo rating.


\section{Dependence on the Selection Matrix}

\label{section:MultipleFinalScores}
In this section, we will for the moment assume the results from \autoref{section:existenceanduniqueness}, \ie that for all expected payoff matrices $P$, and all connected selection matrices $Q$, there will always be a unique final Elo score that satisfies the stability equation. We will use this to show how the Elo system can produce different final ratings.

We consider the current result first because it shows why and how the results in \autoref{section:existenceanduniqueness} are important. 
 
We now present the key theorem of this paper.
\begin{theorem}
    \label{thm: differentQ}
    If $M = \sigma^{-1}(P) \notin im(\text{grad})$ {\em(}\ie $M$ is not a STACM), then there exist at least two selection matrices $Q_1$ and $Q_2$ such that the final Elo scores associated with $Q_1$ and $Q_2$ are different. 
\end{theorem}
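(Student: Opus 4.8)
\emph{Proof strategy.} The plan is to argue by contradiction, leaning on the existence-and-uniqueness result borrowed from \autoref{section:existenceanduniqueness}: for each connected selection matrix $Q$ there is exactly one final Elo score, which I will denote $r(Q)$. Suppose the conclusion fails, so that $r(Q)$ takes the same value $r^{*}$ for \emph{every} connected $Q$. Plugging $r^{*}$ into the stability equation~\eqref{eqn: stability equation} and using linearity of $\text{div}$, the matrix $B := P - \sigma(\text{grad}(r^{*}))$ satisfies $\text{div}(Q \odot B) = 0$ for all connected $Q$, which read off coordinate-by-coordinate is the scalar system $\sum_{j} Q_{ij} B_{ij} = 0$ for every $i$. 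Note $B$ is skew-symmetric: $P_{ij}+P_{ji}=1$ by assumption on $P$, and $\sigma(r^{*}_i - r^{*}_j) + \sigma(r^{*}_j - r^{*}_i) = 1$ since the logistic $\sigma$ satisfies $\sigma(x)+\sigma(-x)=1$, so $B_{ij}+B_{ji}=0$. The whole argument now reduces to showing that these equations, ranging over all connected $Q$, force $B = 0$ off the diagonal; for then $P = \sigma(\text{grad}(r^{*}))$, \ie $M = \sigma^{-1}(P) = \text{grad}(r^{*}) \in \text{im}(\text{grad})$, contradicting the hypothesis that $M$ is not a STACM.

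To annihilate $B$ entry by entry, fix a pair $k \neq l$, let $T$ be any spanning tree of the complete graph on the $m$ players that contains the edge $\{k,l\}$, and let $Q^{T}$ be the selection matrix uniform on the $m-1$ edges of $T$ (zero elsewhere). One checks directly that $Q^{T}$ is a valid connected selection matrix ($Q^{T}$ is symmetric, has zero diagonal, has entries summing to $2$, and has connected support), so $\sum_{j \,\sim_T\, i} B_{ij} = 0$ for every vertex $i$. Now peel off leaves: a leaf $v$ of $T$ has a single $T$-neighbour $u$, so its equation is just $B_{vu}=0$; delete $v$ and, using skew-symmetry to drop the (now known to be zero) term $B_{uv}$ from $u$'s equation, repeat on the smaller tree. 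By induction $B_{ij}=0$ for every edge of $T$, and in particular $B_{kl}=0$. Since $k\neq l$ was arbitrary, $B$ vanishes off the diagonal, completing the contradiction.

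The same bookkeeping makes the statement effective rather than merely non-constructive. Take $Q_1$ to be the uniform (complete-graph) selection matrix and put $r_1 := r(Q_1)$; as above $B := P - \sigma(\text{grad}(r_1))$ is skew-symmetric, and $B \neq 0$ precisely because $M$ is not a STACM. Pick an edge $\{k,l\}$ with $B_{kl}\neq 0$ and let $Q_2 = Q^{T}$ for a spanning tree $T$ through $\{k,l\}$. If $r_1$ were also the final score for $Q_2$, the leaf-stripping computation would give $B_{kl}=0$, a contradiction; hence $r(Q_2)\neq r_1 = r(Q_1)$, and $Q_1, Q_2$ are the two required selection matrices.

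The step I expect to need the most care is the passage from ``$r^{*}$ is the final score for $Q$'' to the clean linear identity $\sum_j Q_{ij}B_{ij}=0$ with $B$ genuinely skew-symmetric: this relies on phrasing the stability equation as an identity in the zero-sum subspace and on the cancellation $\sigma(x)+\sigma(-x)=1$, and skew-symmetry is exactly what lets the leaf-stripping induction proceed (the step where $B_{uv}$ is discarded from vertex $u$'s equation). Everything else --- that a spanning tree through a prescribed edge exists, that a finite tree admits a leaf-elimination ordering, and that $Q^{T}$ meets the defining conditions of a selection matrix --- is routine.
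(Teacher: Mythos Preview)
Your proof is correct and reaches the result by a route that is close in spirit to the paper's but packaged differently. Both arguments exploit spanning-tree selection matrices, where the stability equation simplifies drastically: on a tree $T$, the final score must satisfy $r_i - r_j = M_{ij}$ along every tree edge. The paper derives this via an explicit path-sum formula (rooting the tree and summing $\sigma^{-1}(P_{ij})$ along the unique root-to-node path) and then appeals to the Hodge-theoretic characterisation of STACMs---path-independence of edge sums, citing \cite{xu2012hodgerank} and \cite{balduzziNEURIPS2018_cdf1035c}---to locate two spanning trees $T_1,T_2$ whose path sums between some pair $i,j$ disagree, yielding two distinct final scores. You instead recover the same edge constraint by leaf-stripping and run a contradiction: if every connected $Q$ produced the same $r^{*}$, then $B_{kl}=0$ for every edge of every spanning tree, hence $B=0$ and $M=\text{grad}(r^{*})$. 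Your version is slightly more self-contained, since it needs no external path-independence lemma, only the definition of $im(\text{grad})$; and your effective variant has the pleasant feature of pairing a non-tree $Q_1$ (the uniform complete-graph selection) with a single spanning-tree $Q_2$, whereas the paper uses two trees. Either way the engine is the same: spanning trees make the stability equation determined edge-by-edge, in exactly the directions that detect the failure of $M$ to be a STACM.
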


\begin{proof}

To prove this, we consider the population of $m$ players as vertices of a graph $G$. The selection matrix $Q$ formas a weighted adjacency matrix which determines how frequentlypairs of players play one another. Let us consider the special case where $Q$ is a weighted adjacency matrix of a spanning tree on $m$ vertices. In this case, the stability equation is quite simple to solve. We show an example in \autoref{fig:example_tree}. 

\begin{figure}
    \centering
    \includegraphics[width=0.5\linewidth,trim={7cm 16.2cm 7cm 9.8cm}, clip]{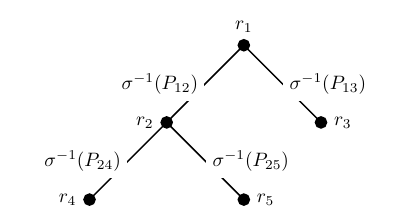}
    \caption{A spanning tree over $m = 5$ vertices, where each node represents a player, whose Elo rating is given by $r_i$. The difference in final Elo score between Player $i$ and $j$ is given by the sum $\sigma^{-1}(P_{i,k_1})+\sigma^{-1}(P_{k_1,k_2})+\dots + \sigma^{-1}(P_{k_n,j})+$, where $k_x$ are the nodes on the unique path from $i$ to $j$. For instance, the path from $4$ to $5$ goes through node $2$, therefore the difference in final Elo scores between Players 4 and 5 is given by $\sigma^{-1}(P_{42}) + \sigma^{-1}(P_{25})$.}
\label{fig:example_tree} 
\end{figure}

We start by selecting an arbitrary node. In our example in \autoref{fig:example_tree}, we select the root wlog to be node 1 and assign it a rating of zero, \ie $r_1 = 0$. For every other node $n$ in the tree, we assign it the rating of 
\[ r_n = \sum_{(i, j) \in E_{1,n}} \sigma^{-1}(P_{ij}),
\]
where $E_{1,n}$ is a path from node $1$ to $n$. Since we are dealing with spanning trees, there is a unique path from the root node to any other node in the tree, so each node is assigned a unique rating.


Once we have assigned ratings to each node in the spanning tree, we normalise by subtracting $\sum_{i} r_i$ from each rating. This results in each node being assigned a rating that satisfies the stability equation such that the sum of ratings is zero. 

If $M$ were the combinatorial gradient of a vector, then the sum of edge weights in a path from node $i$ to node $j$ will be the same regardless of the choice of path \citep{xu2012hodgerank}. This property is due to the fact that $A_{ij} + A_{jk} + A_{ki} = 0$, one of the defining qualities of a STACM \citep{balduzziNEURIPS2018_cdf1035c, xu2012hodgerank}. If this property does not hold then $M$ is not a STACM. 

If there is some non-transitivity present in the game, then the matrix $M$ is not a STACM. Since $M$ is not a STACM then the exist at least two paths between nodes $i$ and $j$ such that the sum of weights along these paths are different. We can select two spanning trees $T_1$ and $T_2$ that contain the first and second paths as subgraphs, respectively. 

If we let $Q_1$ and $Q_2$ be weighted adjacency matrices of $T_1$ and $T_2$ then we can solve the stability equations using the aforementioned method. The $r_i - r_j$ will then differ since the path between nodes $i$ and $j$ have different weights, and therefore the final Elo scores will be different.
\end{proof}

This results is a key finding of this paper, namely that the selection matrix can change the final Elo scores.
Furthermore, if the matrix $M$ is not a STACM, then the continuity of the stability equation implies that there are uncountably infinitely many values that the final Elo score can take. 
 
Selection matrices $Q$ that correspond to graphs with a single connected component are a unique set of final Elo scores $r$ (we prove uniqueness in \autoref{sec:uniqueSolution}).  This set of potential final Elo scores is called the \emph{Elotope} of a game. More details about the geometry of Elotopes can be found in Chapter 3 of \cite{HamiltonThesis2023}, where we discuss properties such as openness, boundedness and a polynomial-time algorithm to determine whether a point can be a final Elo score for a given advantage matrix. 

The results of this section indicate why the selection matrix has an important effect on the Elo ratings. In the next section, we show that there will be a unique final Elo score when the selection matrix is the adjacency matrix of a more complicated graph (than the trees used here). This shows that even when the ground-truth advantage payoff matrix is not a STACM, the Elo ratings will still find a unique final rating.

\section{Existence of Final Elo Ratings}
\label{section:existenceanduniqueness}\label{sec:existenceProof}
In \autoref{section:MultipleFinalScores} we assumed that a solution to the stability and conservation equations exists and is unique for a given expected payoff matrix $P$ and selection matrix $Q$. Now we need to show that this is true, which we do in the following theorem. In this section, we prove the first part, \ie that there exists a solution. Later, in \autoref{sec:uniqueSolution}, we will examine when that solution is unique. 

\begin{theorem}
For a given selection matrix $Q$, the equation
$$
\text{div}(Q\odot P) = \text{div}\big(Q\odot \sigma(\text{grad}(r))\big).
$$
has a unique solution for $r$ if and only if, $Q$ is a weighted adjacency matrix for a graph with one strongly connected component. 
\end{theorem}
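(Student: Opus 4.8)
The plan is to recast the stability equation as the vanishing-gradient condition of a single convex potential on the hyperplane $V=\{r\in\mathbb{R}^m:\sum_i r_i=0\}$, and then to read off \emph{existence} from coercivity and \emph{uniqueness} from strict convexity, with both properties governed exactly by connectivity of $Q$ (since $Q$ is symmetric, ``one strongly connected component'' just means the weighted graph with adjacency matrix $Q$ is connected). Let $\Sigma(x)=\int_0^x\sigma(s)\,ds$; as $\sigma$ is strictly increasing, $\Sigma$ is strictly convex, and for the logistic choice $\Sigma(x)=\log(1+e^x)-\log 2$ (softplus), which grows linearly as $x\to+\infty$ and is bounded as $x\to-\infty$. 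Define
\[
\Phi(r)\;=\;\tfrac12\sum_{i,j}Q_{ij}\bigl[\Sigma(r_i-r_j)-P_{ij}(r_i-r_j)\bigr].
\]
A short computation using $Q_{ij}=Q_{ji}$, $\sigma(x)+\sigma(-x)=1$ and $P_{ij}+P_{ji}=1$ gives $\partial\Phi/\partial r_i=\sum_j Q_{ij}\bigl(\sigma(r_i-r_j)-P_{ij}\bigr)$, so $\nabla\Phi$ is (up to the harmless factor $m$) precisely $\mathrm{div}\!\left(Q\odot(\sigma(\mathrm{grad}(r))-P)\right)$; the same antisymmetry shows $\sum_i\partial\Phi/\partial r_i=0$, hence $\nabla\Phi$ always lies in $V$ and the critical points of $\Phi|_V$ are exactly the solutions of the stability equation. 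So it suffices to count minimisers of $\Phi$ on $V$.

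For the ``if'' direction, assume the graph of $Q$ is connected. First I would check that $\Phi|_V$ is coercive: each summand $x\mapsto Q_{ij}[\Sigma(x)-P_{ij}x]$ is convex, bounded below (its minimum is attained at $x=\sigma^{-1}(P_{ij})$, finite because we assume $P_{ij}\in(0,1)$), and tends to $+\infty$ as $|x|\to\infty$ (softplus dominates as $x\to+\infty$, the term $-P_{ij}x$ as $x\to-\infty$). On $V$ the linear map $r\mapsto(r_i-r_j)_{(i,j):Q_{ij}>0}$ is injective \emph{iff} the graph is connected, so $\|r\|\to\infty$ in $V$ forces $|r_i-r_j|\to\infty$ on some edge with $Q_{ij}>0$; since all remaining summands are bounded below, $\Phi(r)\to\infty$. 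A continuous coercive function on the finite-dimensional space $V$ attains a global minimum, giving existence. For uniqueness, the Hessian is $\nabla^2\Phi=L(W)$, the weighted graph Laplacian with strictly positive edge weights $W_{ij}=Q_{ij}\sigma'(r_i-r_j)$; connectivity makes $L(W)$ positive definite on $V$, so $\Phi|_V$ is strictly convex and has at most one critical point. Hence exactly one solution.

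For the ``only if'' direction I would argue the contrapositive. If the graph of $Q$ splits into $k\ge 2$ connected components $C_1,\dots,C_k$ (an isolated vertex being the degenerate sub-case), the stability equation decouples into one equation per component, each involving only differences $\{r_i-r_j:i,j\in C_\ell\}$. Applying the ``if'' direction to each connected sub-instance $(Q|_{C_\ell},P|_{C_\ell})$ produces a solution on $C_\ell$ that is unique up to adding a constant $t_\ell\mathbf 1_{C_\ell}$. Gluing these while imposing only the single global constraint $\sum_i r_i=0$ leaves a $(k-1)$-dimensional affine family of solutions, so the solution is not unique. This completes the equivalence.

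The step I expect to be the main obstacle is the coercivity argument: proving that $\Phi|_V$ genuinely blows up at infinity is exactly where connectivity (injectivity of the edge-difference map on $V$) and the assumption $P_{ij}\in(0,1)$ (finiteness of $\sigma^{-1}(P_{ij})$, hence a finite lower bound on each summand) both enter, and it requires some care because the potential $\Sigma$ is only one-sidedly coercive in each scalar variable, so one must combine one unbounded summand with uniform lower bounds on the rest rather than argue coordinatewise. The remaining ingredients — the gradient and Hessian identities, positive-definiteness of a connected graph Laplacian on $\mathbf 1^{\perp}$, and the decoupling over components — are routine once the convex potential $\Phi$ is in hand.
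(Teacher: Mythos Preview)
Your proposal is correct and takes a genuinely different route from the paper. The paper proves existence via Brouwer's fixed-point theorem: it takes a large hypersphere $K\subset V$, decomposes the expected update $f(r)-r$ into per-edge contributions $Q_{ij}v_{ij}$, and argues geometrically (over several pages, with auxiliary hyperplanes $R_{ij}$, $S_{ij}$ and a limiting-angle computation) that for sufficiently large radius every $v_{ij}$ pointing outward is asymptotically tangential while at least one points strictly inward, so $f(K)\subset K$. For uniqueness the paper argues directly that $g(r)=\mathrm{div}(Q\odot\sigma(\mathrm{grad}(r)))$ is injective by comparing the tangent spaces of $\sigma(\mathrm{im}\,\mathrm{grad})$ with $\ker\mathrm{div}$ via an ``orthant'' argument combined with the Hodge decomposition.

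Your approach sidesteps all of this by recognising the stability equation as $\nabla\Phi=0$ for the strictly convex potential $\Phi$; existence then follows from coercivity (a one-line consequence of connectivity plus $P_{ij}\in(0,1)$) and uniqueness from the Hessian being a connected weighted Laplacian, positive definite on $\mathbf 1^\perp$. This is shorter, more elementary, and makes the role of connectivity completely transparent in both directions; it is essentially the classical Bradley--Terry MLE argument adapted to fractional win counts $P_{ij}$. What the paper's approach buys is that it never needs to identify a potential and works directly with the dynamical picture of the expected Elo update, which ties more naturally to the paper's Markov-chain framing; your approach buys clarity, brevity, and a clean converse via the $(k-1)$-dimensional family of solutions on $k$ components. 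One small point worth tightening in your write-up: in the coercivity step, the edge on which $|r_i-r_j|\to\infty$ may vary along a sequence, so you should invoke a uniform lower bound over the finitely many edge potentials $\phi_{ij}$ rather than a single fixed edge.
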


We break the proof into several components, starting with the existence of a solution which is proved using Brouwer's fixed point theorem.

\subsection{Conservation and Topological Constraints}

The stability equation, \autoref{eqn: stability equation}, may have infinitely many solutions because the advantage matrix $M$ does not change after adding a constant to all ratings. However, since the sum of Elo ratings stays constant with each iteration, we do not need to worry about this symmetry, and we chose a unique case using the additional constraint
\begin{equation}
\label{eqn:conservationequation}
    \sum_{i = 1}^{m} r_i = 0.
\end{equation}
We call \autoref{eqn:conservationequation} the \textit{conservation equation}.

Another constraint involves the topology of the \textit{interaction network}, an edge-weighted graph whose nodes are players, and whose weighted adjacency matrix is the selection matrix $Q$. The interaction network is required to have exactly one strongly connected component, implying that the matrix cannot be decomposed into blocks. If the interaction network were to have disconnected components, then there would be no information flowing across these components and the players in either component would not be not comparable to each other. Hence, throughout, we assume that the interaction network consists of one strongly connected component.

\subsection{Existence}

In this section, we prove that there exists at least one solution to \autoref{eqn: stability equation}. The key ingredient in our proof is \textit{Brouwer's fixed-point theorem} (FPT), a widely used theorem from topology. There are multiple versions of Brouwer's FPT, of different levels of generality, so we state the particular version that we will use.

\begin{theorem}[Brouwer's fixed-point theorem]
\label{thm:Brouwer'sFPT}
Let $K$ be a convex compact subset of a Euclidean space. Every continuous function $f:K\rightarrow K$ has a {\em fixed point}, \ie a point $x^* \in K$ where $f(x^*) = x^*$.
\end{theorem}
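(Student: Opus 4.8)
The plan is to prove Brouwer's theorem combinatorially via Sperner's lemma, which keeps the argument elementary and self-contained rather than importing machinery from algebraic topology. First I would reduce the statement to the standard $n$-simplex $\Delta^n = \{x \in \mathbb{R}^{n+1} : x_i \ge 0, \ \sum_i x_i = 1\}$. Any nonempty convex compact set $K$ spanning an affine subspace of dimension $n$ is homeomorphic to $\Delta^n$; given a homeomorphism $h : \Delta^n \to K$, a continuous $f : K \to K$ transports to $g = h^{-1} \circ f \circ h : \Delta^n \to \Delta^n$, and a fixed point of $g$ yields one of $f$ since $f(h(x^*)) = h(g(x^*)) = h(x^*)$. (A degenerate lower-dimensional $K$ is handled by working inside its affine hull.) So it suffices to show every continuous $g : \Delta^n \to \Delta^n$ has a fixed point.

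Next I would set up the labeling. Working in barycentric coordinates, for each $x \in \Delta^n$ the identities $\sum_i x_i = 1 = \sum_i g(x)_i$ force the set $L(x) = \{ i : x_i > 0 \text{ and } g(x)_i \le x_i \}$ to be nonempty: if instead $g(x)_i > x_i$ at every coordinate with $x_i > 0$, then $g(x)_i \ge x_i$ everywhere with at least one strict inequality, giving $\sum_i g(x)_i > 1$, a contradiction. Fixing a choice $\ell(x) \in L(x)$ yields a valid Sperner labeling: if $x$ lies on the face spanned by $S \subseteq \{0,\dots,n\}$ then $x_i = 0$ for $i \notin S$, so $\ell(x) \in L(x) \subseteq S$; in particular each vertex $v_j$ receives its own label $j$.

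The combinatorial heart is Sperner's lemma: for any simplicial subdivision of $\Delta^n$ carrying a Sperner labeling, the number of fully labeled (rainbow) small simplices, whose $n+1$ vertices realize all labels $0,\dots,n$, is odd and hence positive. I would prove this by induction on $n$ via a parity argument on \emph{doors}, the $(n-1)$-facets of small simplices whose label set is exactly $\{0,\dots,n-1\}$: each small simplex has $0$, $1$, or $2$ doors, with exactly one precisely when it is rainbow. Double-counting incidences, interior doors are shared by two small simplices (contributing an even amount) while boundary doors must lie on the facet of $\Delta^n$ spanned by $\{0,\dots,n-1\}$, where the restricted labeling is Sperner of dimension $n-1$; the inductive hypothesis makes their count odd, forcing an odd number of rainbow simplices.

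Finally I would run the limiting argument. Taking subdivisions with mesh tending to zero, Sperner's lemma supplies in each a rainbow simplex; passing to a subsequence along which these shrinking simplices collapse to a common point $x^*$, every label $i$ is carried by a sequence of vertices $w^{(k)}_i \to x^*$ with $g(w^{(k)}_i)_i \le (w^{(k)}_i)_i$. Continuity of $g$ gives $g(x^*)_i \le x^*_i$ for each $i$, and since $\sum_i g(x^*)_i = \sum_i x^*_i = 1$ all these inequalities are equalities, so $g(x^*) = x^*$. I expect the main obstacle to be the parity induction in Sperner's lemma, where the boundary bookkeeping across dimensions must be handled carefully, together with checking that the closed conditions $g(\cdot)_i \le (\cdot)_i$ survive the limit; the reduction to $\Delta^n$ and the verification that $\ell$ is Sperner are routine by comparison.
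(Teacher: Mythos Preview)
Your proof outline is correct: the reduction to the simplex via homeomorphism, the Sperner labeling $\ell(x)\in\{i:x_i>0,\ g(x)_i\le x_i\}$, the parity/door-counting induction for Sperner's lemma, and the compactness limit are all sound and assembled in the standard way.

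However, the paper does not actually prove \autoref{thm:Brouwer'sFPT}. It states the theorem and then simply refers the reader to \cite{TopologyTextMilnor1965} for a proof, treating Brouwer's FPT as a black box to be applied in \autoref{thm:largeR}. So there is no argument in the paper to compare against yours in detail. That said, the cited reference (Milnor) proves Brouwer via differential topology---smooth approximation together with Sard's theorem and a degree/retraction argument---which is a genuinely different route from your combinatorial Sperner approach. Your path is more elementary and self-contained (no smooth structure, no measure-zero critical values), at the cost of the somewhat fiddly boundary bookkeeping in the parity induction; Milnor's route is slicker once the differential-topology machinery is in hand but imports more background. For the purposes of this paper either is overkill: the authors only need the statement, and your write-up would serve perfectly well as a replacement for the bare citation.
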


An interested reader can find a proof of Brouwer's FPT in \cite{TopologyTextMilnor1965}. 

We apply Brouwer's FPT to the function $f:\mathbb{R}^{m-1} \rightarrow \mathbb{R}^{m-1}$ defining the expected Elo scores at time-step $t+1$, given Elo scores at time $t$, given by
\begin{equation}
    \label{eqn:ExpectedEloChange}
    f(r) = \mathbb{E}[r^{(t+1)}|r] = r + \eta \, \text{div} \bigg(Q\odot \big(P - \sigma(\text{grad}(r))\big)\bigg).
\end{equation}

If $f$ has a fixed point, $f(r^*) = r^*$, then $\eta \, \text{div}(Q\odot (P_{ij} - \sigma(\text{grad}(r)))) = 0$ has a solution $r^*$, \textit{i.e.,} the stability equation is satisfied and there exists a final score. Hence, our goal is to show that our system satisfies the conditions of the FPT. 

Given the conservation equation, we are working in a space isomorphic to $\mathbb{R}^{m-1}$. So, henceforth, we  will refer to the subspace $\sum_{i = 0}^{m} r_i = 0$ as $\mathbb{R}^{m-1}$. 

\begin{theorem}
\label{thm:largeR}
    Given a hypersphere $K$ with sufficiently large radius, $R$, the conditions of Brouwer's FPT (Theorem \autoref{thm:Brouwer'sFPT}) are satisfied for the function $f$ defined in \autoref{eqn:ExpectedEloChange}.
\end{theorem}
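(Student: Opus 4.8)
The plan is to verify the three hypotheses of Brouwer's fixed-point theorem for the map $f$ of \autoref{eqn:ExpectedEloChange} restricted to $K=\{r\in\mathbb{R}^{m-1}:\|r\|\le R\}$, the solid ball bounded by the hypersphere of radius $R$. Two of the three are immediate: a closed ball is convex and compact, and $f$ is continuous because $\sigma$ is continuous while $\text{grad}$, $\text{div}$, and Hadamard product with the fixed matrix $Q$ are continuous (linear) operations. So the entire content is the self-map property $f(K)\subseteq K$, i.e.\ $\|f(r)\|\le R$ whenever $\|r\|\le R$, for $R$ chosen large enough. (Here, as is implicit in writing $A=\sigma^{-1}(P)$, I assume every $P_{ij}$ with $Q_{ij}>0$ lies in the open interval $(0,1)$, so that $\sigma^{-1}(P_{ij})$ is finite.)

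Write $f(r)=r+\eta\,g(r)$ with $g(r):=\text{div}\big(Q\odot(P-\sigma(\text{grad}(r)))\big)$. Since $P_{ij}\in[0,1]$ and $\sigma$ takes values in $[0,1]$, each entry of $Q\odot(P-\sigma(\text{grad}(r)))$ has absolute value at most $Q_{ij}$, so $\|g(r)\|\le B$ for a constant $B$ independent of $r$ (using $\sum_{ij}Q_{ij}=2$). Expanding the square,
\[
\|f(r)\|^2=\|r\|^2+2\eta\,\langle r,g(r)\rangle+\eta^2\|g(r)\|^2\le \|r\|^2+2\eta\,\langle r,g(r)\rangle+\eta^2B^2 .
\]
Thus it suffices to establish (i) $\langle r,g(r)\rangle\le-\tfrac12\eta B^2$ for all $r$ with $\|r\|\ge R_1$, where $R_1$ is some sufficiently large radius; for then $R_1\le\|r\|\le R$ gives $\|f(r)\|^2\le\|r\|^2\le R^2$ by the display, while for $\|r\|\le R_1$ the crude bound $\|f(r)\|\le\|r\|+\eta B\le R_1+\eta B$ applies. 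Taking $R=R_1+\eta B$ then yields $f(K)\subseteq K$, which is the theorem.

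The heart of the argument is (i): the expected-update vector field points inward once ratings become extreme. Using $Q=Q^\top$ together with the skew-symmetry $P_{ij}-\sigma(r_i-r_j)=-(P_{ji}-\sigma(r_j-r_i))$, a symmetrisation of the double sum gives
\[
\langle r,g(r)\rangle=\frac{1}{2m}\sum_{i,j}Q_{ij}\,(r_i-r_j)\big(P_{ij}-\sigma(r_i-r_j)\big).
\]
Each summand has the shape $\psi(x)=x\big(p-\sigma(x)\big)$ with $p=P_{ij}\in(0,1)$ and $x=r_i-r_j$; this scalar function vanishes at $x=\sigma^{-1}(p)$, is bounded above (it is continuous with $\psi(x)\sim-(1-p)x\to-\infty$ as $x\to+\infty$ and $\psi(x)\sim p\,x\to-\infty$ as $x\to-\infty$), and tends to $-\infty$ at a linear rate as $|x|\to\infty$. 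Since the player set is finite, there is a single constant $M_0$ bounding every summand from above, and a function $\beta(D)\to-\infty$ as $D\to\infty$ with $\psi_{ij}(x)\le\beta(D)$ whenever $|x|\ge D$ and $Q_{ij}>0$.

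To close, I invoke connectivity of the interaction network, which is a standing assumption. If $\|r\|\ge R_1$ then, since $\sum_i r_i=0$ forces $\min_i r_i\le 0\le\max_i r_i$ and any two vertices are joined by a path of at most $m-1$ edges, some edge $(i_0,j_0)$ with $Q_{i_0j_0}>0$ satisfies $|r_{i_0}-r_{j_0}|\ge \|r\|/\big(\sqrt m\,(m-1)\big)=:D_1$. Bounding the $(i_0,j_0)$-summand using $\beta(D_1)$, every other summand by $Q_{ij}M_0$, and using $\sum_{ij}Q_{ij}=2$ and $q_{\min}:=\min\{Q_{ij}:Q_{ij}>0\}>0$, we obtain $\langle r,g(r)\rangle\le\tfrac{1}{2m}\big(q_{\min}\beta(D_1)+2M_0\big)$, which tends to $-\infty$ as $R_1\to\infty$; choosing $R_1$ large enough makes this at most $-\tfrac12\eta B^2$, establishing (i). I expect this last step to be the main obstacle — converting ``$\|r\|$ is large'' into ``some interaction-graph edge carries a large rating gap,'' and hence a uniformly negative inner product on the sphere — and it is precisely here that the one-strongly-connected-component hypothesis is used: a disconnected $Q$ would permit $\|r\|$ to be arbitrarily large while every edge gap $r_i-r_j$ stays zero, so $g$ would not point inward and the argument would break.
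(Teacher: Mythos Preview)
Your proof is correct and, in my view, cleaner than the paper's. Both arguments share the same skeleton---bound the step size uniformly, show the drift points inward on a large enough sphere, and then enlarge the radius by one step length to capture interior points---but they differ in how the inward-pointing part is established. The paper works geometrically: it decomposes $f(r)-r=\sum_{ij}Q_{ij}v_{ij}$ with each $v_{ij}$ pointing perpendicularly toward the hyperplane $r_i-r_j=\sigma^{-1}(P_{ij})$, proves a separate lemma (\autoref{lemma:InteriorVector}) characterising the boundary region where a given $v_{ij}$ points outward, and then argues via angles $\theta_{ij}$ that as $R\to\infty$ the outward components shrink to zero while connectivity guarantees at least one strictly inward component survives. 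You instead compute $\langle r,g(r)\rangle$ directly, symmetrise to obtain $\tfrac{1}{2m}\sum_{ij}Q_{ij}(r_i-r_j)\bigl(P_{ij}-\sigma(r_i-r_j)\bigr)$, and reduce everything to scalar analysis of $\psi(x)=x(p-\sigma(x))$, which is bounded above and tends to $-\infty$ linearly at both ends. Your connectivity step---large $\|r\|$ on the zero-sum hyperplane forces a large gap across some edge of the interaction graph, so one term is as negative as desired while the rest are uniformly bounded---is precisely the paper's observation that the hyperplanes $S_{ij}$ meet only at $c$ when $Q$ is connected, but recast so it plugs straight into an inequality. The paper's route buys geometric intuition (figures of spheres and hyperplanes); yours buys a fully quantitative bound and avoids the somewhat informal limiting argument in $R$.
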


\begin{proof}
This is a large proof and key parts of the proof are contained in \autoref{subsec:existencePart1} and \autoref{subsec:existencePart2} as will be indicated.

To start, note that the Elo ratings of every player in the population is a point $r$ in $m$-dimensional space. At each game, we select one of $m\choose 2$ hyperplanes of the form $r_{i} - r_{j} = \sigma^{-1}(P_{ij})$. All of these hyperplanes are parallel to the line $(1,\dots , 1)$ and hence the configuration of hyperplanes has a translation symmetry about this line. 

Once we have chosen the hyperplane, the Elo scores of Players $i$ and $j$ will change so that $r$ moves along the null space of $r_{i} - r_{j} = \sigma^{-1}(P_{ij})$, and hence moves perpendicularly to the vector $(1,\dots, 1)$. Therefore at each iteration we will stay in the subspace $\sum_{i = 0}^{m} r_i = 0$.  

The convex set $K$ that we use in Brouwer's FPT is given by a hyper-sphere of radius $R$ centred at the point $c\in \mathbb{R}^{m-1}$. The purpose of $c$ is to be a point to which we can compare other points on the sphere. The location of the point $c$ can be chosen arbitrarily.  

Because $K$ is a hypersphere it is already convex and compact and because $f$ is the composition of continuous functions it is also continuous. The only condition of Brouwer's FPT left to prove is that $f$ maps $K$ into a subset of itself. We do this in two main parts:
\begin{enumerate}
    \item For sufficiently large $R$, we show the expected change in Elo score maps points from the boundary of $K$ towards its interior.

    \item For sufficiently large $R$, we show the expected value of the Elo score cannot leave $K$.
    
\end{enumerate}
We consider these cases in detail in the two sections below.

\subsection{Part1}
\label{subsec:existencePart1}
\subsubsection{Step 1.1. Decomposition of $f$}
We start by proving that for sufficiently large $R$, the expected change in Elo score at the boundary of $K$, is directed inwards. For the purposes of this proof, it is useful to think of the Elo rating $r$ as being a point in $\mathbb{R}^{m-1}$ that is moving in discrete time. 

Recall that when Player $i$ and $j$ play one another, it is only Player $i$ and $j$'s Elo scores that change and that the expected change in Elo scores for Player $i$ is given by 
$$\mathbb{E} [r_i^{(t+1)}|r_i^{(t)}] = r_i^{(t)} + \eta(P_{ij} - \sigma(r^{(t)}_i - r^{(t)}_j)).$$

Whenever $i$ and $j$ play one another, the Elo rating $r$, viewed as a point in $\mathbb{R}^{m-1}$, moves in the direction perpendicular to the hyperplane $r_i - r_j = \sigma^{-1}(P_{ij})$. Recall that we update the expected Elo score according to \autoref{eqn:expected change in Elo score other player}. If $P_{ij} - \sigma(r^{(t)}_i - r^{(t)}_j)$ is negative, then $r^{(t)}_i - r^{(t)}_j$ will decrease bringing $\sigma(r^{(t)}_i - r^{(t)}_j)$ closer to $P_{ij}$. Likewise, if $P_{ij} - \sigma(r^{(t)}_i - r^{(t)}_j)$ is positive, then $\sigma(r^{(t)}_i - r^{(t)}_j)$ is expected to increase. Either way, the vector of Elo ratings $r$ is expected to move towards the hyperplane $r_i - r_j = \sigma^{-1}(P_{ij})$ rather than away from it. The point $r$ moves perpendicularly to the hyperplane because only $r_i$ and $r_j$ change, and the amount that $r_i$ gains is the amount that $r_j$ loses. 

This allows us to express $f(r)$ as a sum of vectors:
$$
f(r) - r = \sum_{i \neq j} Q_{ij} v_{ij},
$$
where $v_{ij}$ represents the expected change in the vector of Elo ratings $r$ if Players $i$ and $j$ are selected to play one another. The sum is weighted by $Q_{ij}$, which represents the probability with which pairs of players are selected to play each other. Each vector $v_{ij}$ points towards the plane $r_i - r_j = \sigma^{-1}(P_{ij})$ along its null space and $Q_{ij} \in [0,1)$.

\begin{figure}[h]
    \centering
    \includegraphics[width = 0.9\textwidth]{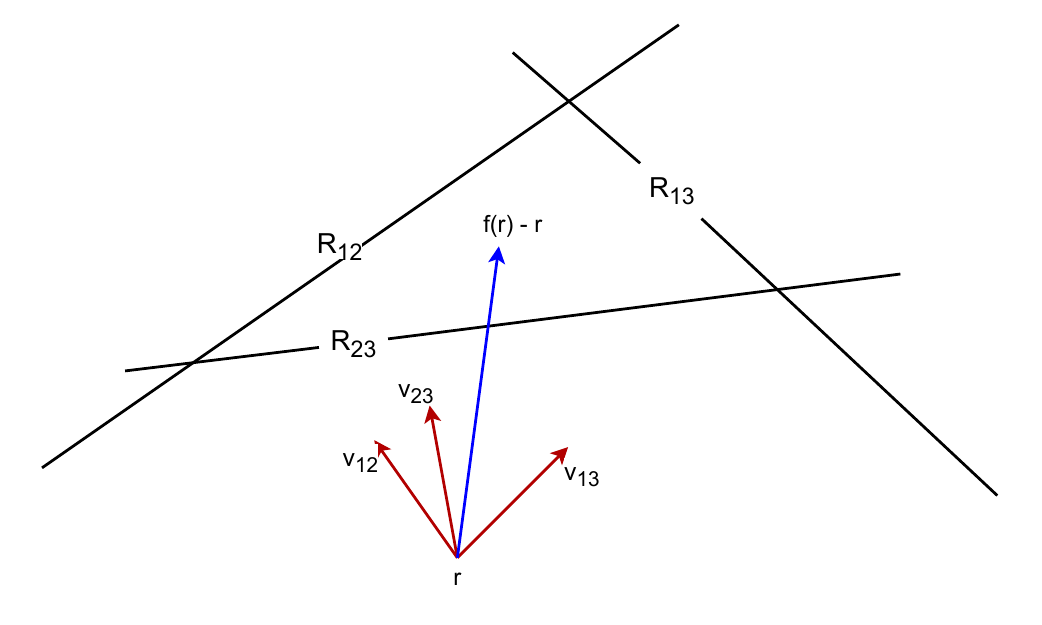}
    \caption{An illustration of the average change in Elo score expressed as a weighted sum of vectors $v_{ij}$ along the null spaces of the hyperplanes $R_{ij}$.}
    \label{fig:arrowdiagram}
\end{figure}


A diagram of $f$, decomposed into a weighted sum of vectors, is shown in \autoref{fig:arrowdiagram}. We now ask ``When is the change in $f(r) - r =  \sum_{i \neq j} Q_{ij} v_{ij}$ directed towards the inside of $K$ from its boundary?''



\subsubsection{Step 1.2. When does $v_{ij}$ point inwards?}

A simpler question is ``when is $v_{ij}$ directed towards the inside of $K$?" A sufficient but non-necessary condition for $f$ to be directed towards $K$'s interior is that all of the weighted vectors $v_{ij}$ are directed towards $K$'s interior. The following lemma specifies when the vector $v_{ij}$ is directed towards $K$'s interior. 


\begin{lemma}
\label{lemma:InteriorVector}
Recall that $c$ is the centre of $K$. Let $R_{ij}$ denote the hyperplane $r_i - r_j = \sigma^{-1}(P_{ij})$, and let $S_{ij}$ denote the hyperplane parallel to $R_{ij}$ that intersects $c$.
Then the vector $v_{ij}$  from 
\begin{itemize}
    \item any point on the boundary $\delta K$ of $K$ that is in between the hyperplanes $R_{ij}$ and $S_{ij}$ will not be directed towards $K$'s interior, and
    
    \item all other points on $\delta K$ will be directed towards the interior.
\end{itemize} 
\end{lemma} 

\begin{figure}
    \centering
    \includegraphics[width = 0.7\textwidth]{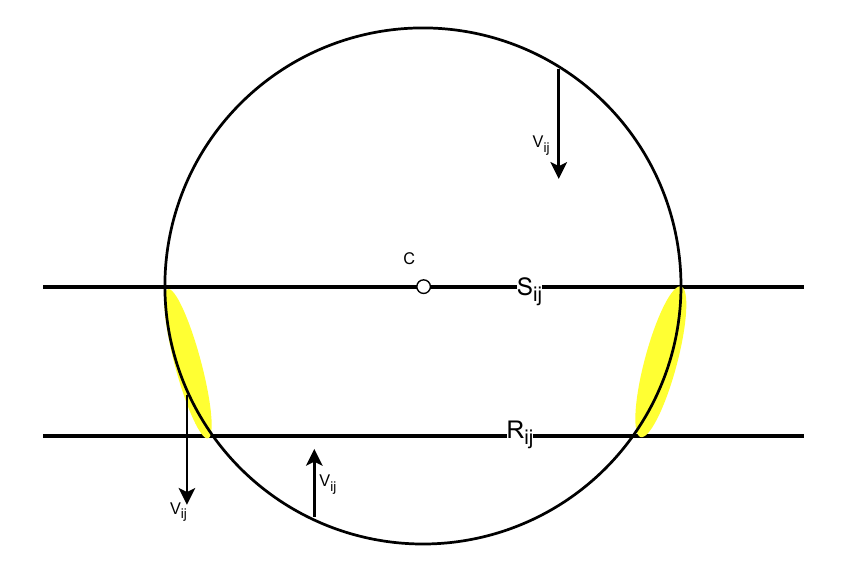}
    \caption{A (2d) hypersphere $K$ centred at $c$. We draw three arrows from the boundary of the hypersphere to the hyperplane $R_{ij}$, shown below the circle's equator. Two of these arrows are directed to the hypersphere's interior and the third arrow, starting in the yellow region, leaves the hypersphere.}
    \label{fig:circlediagram}
\end{figure}

\begin{proof} As games involve two players at a time only two Elo ratings will change in any given move, so the change in $r$ is confined to an affine plane containing $r$ and $c$. Moreover $K$ is rotationally symmetric, so it is sufficient to demonstrate the lemma when $m = 3$, \ie $K$ is a 2-dimensional circle and $R_{ij}$ and $S_{ij}$ are lines.



In this case, Lemma \autoref{lemma:InteriorVector} can be verified visually -- see \autoref{fig:circlediagram}. If $r^{(t)}$ is on the boundary of $K$ in between $S_{ij}$ and $R_{ij}$, then $r^{(t)}$ will be mapped towards $R_{ij}$ pulling it away from $K$'s boundary. For all other points on the boundary of $K$, $r^{(t)}$ will still be pulled towards $R_{ij}$ but in this case, the quickest way to get to $R_{ij}$ is through $K$. Therefore, in this situation, $r^{(t)}$ is mapped to the interior of $K$ (for sufficiently small step sizes). \end{proof}

We can see that for large $R$ the majority of points on the boundary of $K$ will have $v_{ij}$ directed inwards, but there can be points such that $v_{ij}$ is directed away from $K$. However, $f(r)- r = \sum_{ij} Q_{ij} v_{ij}$ is a (non-negatively) weighted sum of these, so even if $v_{ij}$ is directed away from $K$ for some $i$ and $j$, this may be the exception and the other components of $f(r)$ may be strong enough to ensure that $f(r)$ is directed towards $K$'s interior. This is exactly what we now show. 

\subsubsection{Step 1.3. For large $R$, $f$ is directed towards $K$'s interior}

First we will calculate exactly how much of $Q_{ij} v_{ij}$ is directed towards $K$'s centre and how much is directed away from it. We will also show that as the radius of $K$ approaches infinity, the components of $Q_{ij} v_{ij}$ directed away from $K$'s centre will approach zero for all $i$ and $j$ and the components of $f(r)$ tangential to $K$ and directed towards $K$'s centre will stay above a lower bound. This will complete part one of our proof that the conditions of Brouwer's fixed-point theorem hold. 

Let's consider what happens to the vectors $v_{ij}$ when we keep $c$ fixed but let the radius of $K$ approach infinity. As we increase $K$'s radius the distance from $S_{ij}$ and $R_{ij}$ stay the same for all $i$ and $j$. Equivalently, we can view this from the frame of reference of $K$. In this new frame of reference $K$, the hypersphere stays the same size and $R_{ij}$ moves \mbox{towards $S_{ij}$}. 



Every point $r$ on the boundary of $K$ will make a unique, acute angle with the hyperplane $S_{ij}$ at $c$, we call this angle $\theta_{ij}$. As the radius of $K$ approaches infinity, $R_{ij}$ and $S_{ij}$ will approach one another and the component of $Q_{ij}v_{ij}$ directed towards $c$ will approach  $Q_{ij}v_{ij}\sin(\theta_{ij})$. All other components of $Q_{ij} v_{ij}$ will be tangential to $K$ for all $i$ and $j$. Since $\theta_{ij}$ is acute, $v_{ij}\sin(\theta_{ij})$ will be non-negative.

\begin{figure}[h]
    \centering
    \includegraphics[width = 0.8\textwidth]{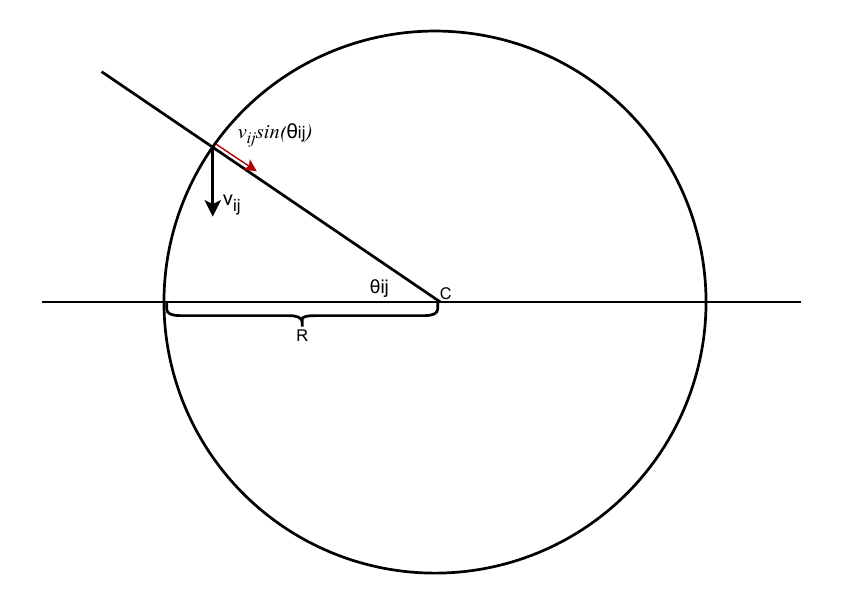}
    \caption{A diagram of $\theta_{ij}$, the unique, acute angle between $r$, $c$, and $S_{ij}$. The radius of $K$ is denoted by $R$, and the asymptotic component of $v_{ij}$ directed towards the centre of $K$ is given by $Q_{ij} v_{ij}\sin(\theta_{ij})$ as $R$ approaches $\infty$.}
    \label{fig:AngleDiagram}
\end{figure}
It is important to note that $\theta_{ij}$ is the limit as the radius of $K$ approaches infinity. Even though $v_{ij}\sin(\theta_{ij})$ is non-negative in the limit as $K$'s radius approaches infinity $f(r)$ may be directed away from the centre of $K$ for any finite radius of $K$. 

Let us consider what happens to $r$, when $r$ is between $S_{ij}$ and $R_{ij}$ for some pair of Players $i$ and $j$. The $Q_{ij} v_{ij}$ component of $r$ will be directed away from $K$ but as $K$'s radius approaches infinity, $r$ will stay between $R_{ij}$ and $S_{ij}$ and $\theta_{ij}$ will approach zero, $v_{ij}\sin(\theta_{ij})$ will approach zero and $Q_{ij} v_{ij}$ will approach a vector tangential to $K$. 

When $r$ is not between $R_{ij}$ and $S_{ij}$, $\theta_{ij}$ will not approach zero, hence $v_{ij}\sin(\theta_{ij})$ will be positive and there will be some component of $Q_{ij} v_{ij}$ directed towards $c$, the centre \mbox{of $K$}. 

If we have a simply connected interaction network, \textit{i.e.,} a path from every node to every other node, then the $S_{ij}$ hyperplanes will all intersect one another at $c$ and only $c$. This means that there are no points on the boundary of $K$ that belong to every hyperplane $S_{ij}$ for all $i$ and $j$. Hence, for a sufficiently large $K$, for all points $r$ on $\delta(K)$ there will always be some value of $i$ and $j$ such that $r$ is not between $R_{ij}$ and $S_{ij}$. 

If, it were the case that $Q$ did not to form a connected interaction network, then there would exist points on the boundary of $K$ that belong to every hyperplane $S_{ij}$ for all $i$ and $j$, with non-zero $Q_{ij}$. When we have $m\choose 2$ hyperplanes of the form $r_i - r_j = \sigma^{-1}(P_{ij})$ we can find their intersection (if it exists) by solving a system of linear equations. If our interaction network $Q$ is a spanning tree then the corresponding system of linear equations is completely determined and can be solved. If $Q$ is more than a spanning tree, \ie contains cycles, then we can select all the spanning trees of $Q$ and come up with a finite number of solutions. If there is a finite number of solutions (a bounded set) then we can increase $R$ until $K$ subsumes all of these solutions and there would be no points on the boundary of $K$ that belong to every hyperplane. If $Q$ was not a connected network, then there would be infinitely many solutions and they would form an affine space. Because affine spaces are unbounded, no matter how large we make $K$, it will always intersect every hyperplane. We discuss how the topology of $Q$ relates to systems of linear equations and Elo scores in \autoref{section:MultipleFinalScores}.

The components of all the $Q_{ij} v_{ij}$ terms directed towards $c$ will stay bounded from below since their limit is non-negative, and strictly positive for at least one pair $i$ and $j$. Therefore the sum $\sum_{ij} Q_{ij} v_{ij}$ will have a positive component directed at $c$ and hence $f(r)$ will be directed towards the interior of $K$. The components of $Q_{ij}v _{ij}$ tangential to $K$ will become irrelevant as $K$ approaches infinity. 

This completes the first part of our proof that the conditions for Brouwer's FPT hold at the boundary of $K$. 

\subsection{Part 2}
\label{subsec:existencePart2}

Now that we have shown that, for a hypersphere $K$ of sufficiently large radius, then \autoref{eqn:ExpectedEloChange} maps elements of $\delta K$ to $K$'s interior, we need to prove that none of the points in the interior of $K$ will be mapped out of $K$.

The distance $f(r)$ can differ from $r$ has an upper bound of $\sqrt{2}\eta$, where $\eta$ is the {\em learning rate} (or gain) of the Elo updating algorithm, because 
if Player $i$ and $j$ compete, the expected change to the $i$th coordinate of $r$ is $\eta(P_{ij} - \sigma(r_i - r_j))$, and we subtract the same from the $j$th coordinate of $r$. Since $P_{ij}$ and $\sigma(r_i - r_j)$, are both probabilities, the quantity $\eta \big(P_{ij} - \sigma(r_i - r_j) \big)$ is between $-\eta$ and $\eta$, therefore for each pair of players we add at most $\eta$ to one coordinate of $r$, and subtract at most $\eta$ from another. Hence $r$ moves at most $\sqrt{2} \eta $ from the original position after a game. 

Each pair of players are selected according to $Q$, and since the expected value over a set of vectors of length at most $\sqrt{2}\eta$ must be at most $\sqrt{2}\eta$, $|f(r)-r|$ can be upper bounded by $\sqrt{2} \eta $.

If we take the set $K$ with a sufficiently large radius from \autoref{subsec:existencePart1} such that $f(r)$ is directed inwards on $K$'s surface, we can extend the radius of $K$ by an extra $\sqrt{2}\eta$ units. By choosing a radius this long, every point of distance less than $\sqrt{2}\eta$ from $K$'s boundary, will be directed inwards. This follows from part 1. All other vectors $f(r)$ in $K$, will not be long enough to leave $K$.

This completes our proof of \autoref{thm:largeR}, Brouwer's fixed point theorem \linebreak is satisfied and \autoref{eqn:ExpectedEloChange} has at least one fixed point. 
\end{proof}

\section{Uniqueness of the Solution}
\label{sec:uniqueSolution}
Now that we know that at least one solution exists, we need to show that it is unique. The solutions to the stability equation, \autoref{eqn: stability equation}, represent fixed points of \autoref{eqn:ExpectedEloChange}. The left hand side of \autoref{eqn: stability equation} does not depend on the ratings vector, $r$ and hence if we can prove that the right-hand side of the stability equation, 
$
\text{div}(Q\odot \sigma(\text{grad}(r))),
$
is an injective function of $r$, then when the stability equation has a solution, it has to be a unique solution. 

\begin{theorem}
    Given a vector of Elo ratings $r$ for $m$ players, sigmoid function $\sigma$, and selection matrix $Q$, the function 
    $$
    g(r) = \text{div}(Q\odot \sigma(\text{grad}(r))),
    $$
    is injective in $r$ if and only if $Q$ is the weighted adjacency matrix of a graph of $m$ vertices with one strongly connected component. 
\end{theorem}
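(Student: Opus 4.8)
First I would fix the ambient space. Since $\text{grad}$ annihilates constant vectors, $g(r+c\mathbf{1})=g(r)$ for every scalar $c$, so $g$ is never injective on all of $\mathbb{R}^m$; the statement must (and does, matching the rest of the paper) refer to injectivity on the hyperplane $V=\{r\in\mathbb{R}^m:\sum_i r_i=0\}\cong\mathbb{R}^{m-1}$. I would also record two elementary facts used throughout: unwinding the definitions, $g(r)_i=\tfrac1m\sum_j Q_{ij}\,\sigma(r_i-r_j)$; and from $P_{ij}+P_{ji}=1$ with $A=\sigma^{-1}(P)$ skew-symmetric one obtains the identity $\sigma(x)+\sigma(-x)=1$. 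Finally, because $Q$ is symmetric its interaction graph is undirected, so ``one strongly connected component on $m$ vertices'' simply means ``connected and spanning'' (an isolated player would be a component by itself).

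For the ``if'' direction (connected $\Rightarrow$ injective) the plan is to show that $g$ is a \emph{strictly monotone operator} on $V$. Given distinct $r,s\in V$, set $a=s-r$ and compute
\[
\big\langle g(s)-g(r),\,s-r\big\rangle=\frac1m\sum_{i,j}Q_{ij}\,a_i\big(\sigma(s_i-s_j)-\sigma(r_i-r_j)\big).
\]
Relabelling $i\leftrightarrow j$, using $Q_{ij}=Q_{ji}$ and $\sigma(s_j-s_i)-\sigma(r_j-r_i)=-\big(\sigma(s_i-s_j)-\sigma(r_i-r_j)\big)$, I would symmetrise this to
\[
2\big\langle g(s)-g(r),\,s-r\big\rangle=\frac1m\sum_{i,j}Q_{ij}\,(a_i-a_j)\big(\sigma(s_i-s_j)-\sigma(r_i-r_j)\big).
\]
Since $s_i-s_j=(r_i-r_j)+(a_i-a_j)$ and $\sigma$ is strictly increasing, each summand $(a_i-a_j)\big(\sigma(s_i-s_j)-\sigma(r_i-r_j)\big)$ is $\ge0$, vanishing exactly when $a_i=a_j$. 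Hence the inner product is nonnegative, and it equals $0$ only if $a_i=a_j$ along every edge of the interaction graph; connectedness forces $a$ constant, and $\sum_i a_i=0$ then forces $a=0$, i.e.\ $s=r$. So for $s\ne r$ in $V$ the inner product is strictly positive, whence $g(s)\ne g(r)$. (As a sanity check one can differentiate: the Jacobian of $g$ on $V$ equals $\tfrac1m$ times the Laplacian of the weighted graph with edge weights $Q_{ij}\sigma'(r_i-r_j)\ge0$, positive definite on $V$ iff the graph is connected; the identity above is the global version of this.)

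For the ``only if'' direction I would argue by contrapositive. If the interaction graph has $\ge2$ connected components, choose two of them, $C$ and $C'$, of sizes $n$ and $n'$, and set $a=\tfrac1n\mathbf{1}_C-\tfrac1{n'}\mathbf{1}_{C'}$, so $a\ne0$ and $\sum_i a_i=0$. Taking any $r\in V$ (say $r=0$), the vector $s=r+a$ lies in $V$ and differs from $r$; but $a$ is constant on each component while $Q_{ij}=0$ whenever $i,j$ lie in different components, so $s_i-s_j=r_i-r_j$ for every pair with $Q_{ij}\ne0$, giving $g(s)=g(r)$. Thus $g$ is not injective.

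The step I expect to be the real content is the global part of the ``if'' direction: positive definiteness of the Jacobian only yields \emph{local} injectivity, and the work is in upgrading it to a global statement. The symmetrised-sum / monotone-operator identity is what does this cleanly, and it has the bonus of needing only strict monotonicity of $\sigma$ (not differentiability or evenness of $\sigma'$). Everything else — the reduction to $V$, the remark that strong connectivity coincides with connectivity for symmetric $Q$, and the ``only if'' construction — is routine bookkeeping.
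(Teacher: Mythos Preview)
Your proof is correct and takes a genuinely different route from the paper. The paper proceeds geometrically: it views $\sigma\circ\text{grad}$ as embedding $V$ as a curved $(m-1)$-dimensional hypersurface in $\mathbb{R}^{\binom{m}{2}}$, then argues that the projection by $\text{div}(Q\odot\,\cdot\,)$ is injective because no tangent vector to this hypersurface lies in $\ker(\text{div})$. The key tool there is the Hodge decomposition (so that $\ker(\text{div})\perp\text{im}(\text{grad})$) together with the observation that applying a monotone $\sigma$ coordinate-wise preserves the orthant of each tangent vector. Your argument, by contrast, is purely algebraic: you symmetrise $\langle g(s)-g(r),\,s-r\rangle$ into a sum of edge terms $Q_{ij}(a_i-a_j)\big(\sigma(s_i-s_j)-\sigma(r_i-r_j)\big)\ge 0$ and read off strict monotonicity directly.

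Your approach buys two things. First, it is more elementary: no Hodge theory, no tangent spaces, only the strict monotonicity of $\sigma$ and the symmetry of $Q$. Second, and more importantly, it is \emph{global} from the outset. The paper's tangent-space criterion is really a statement about the Jacobian having trivial kernel, which yields only local injectivity (an immersion); the passage from ``no tangent vector lies in the null space of the projection'' to ``the projection is globally injective'' requires additional topological input that the paper does not spell out. Your monotone-operator identity sidesteps this entirely, as you note in your final paragraph. The paper's approach does, however, make the role of the Hodge decomposition explicit and connects the result to the broader combinatorial-Hodge-theory framework used elsewhere in the paper.
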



\begin{proof}
The first step is to look at the components of $g$. This function consists of the combinatorial gradient operator, which is a linear function from $\mathbb{R}^{m-1}$ to $\mathbb{R}^{\binom{m}{2}}$. After the combinatorial gradient operator, we then apply the non-linear sigmoid function to each element of $\text{grad}(r)$, multiply the resulting matrix element-wise by $Q$, and finally apply the combinatorial divergence operator.  

Geometrically, the function $g$ starts with an $(m-1)$-dimensional hyperplane embedded in $\mathbb{R}^m$. The function then applies the combinatorial gradient function resulting in an $(m-1)$-dimensional subspace embedded in $\mathbb{R}^{\binom{m}{2}}$. Then, $g$ applies the sigmoid function element-wise to this subspace, mapping it from a flat hyperplane in $\mathbb{R}^{\binom{m}{2}}$ to a curved hypersurface in $(0,1)^{\binom{m}{2}}$.

The second component of $g$ consists of projecting the image of $(\sigma(\text{grad}(r)))$ back onto the original subspace of $\mathbb{R}^m$. This is done by multiplying every point in the image of $(\sigma(\text{grad}(r)))$ element-wise by $Q$, and then applying the combinatorial divergence operator. 

For ease of exposition, we will consider the case where $Q$ is a selection matrix where every off-diagonal entry is 1 (every diagonal entry is 0 by definition). Once we prove that $g$ is injective in this case, we will extend our proof to $Q$ where $Q$ is the weighted adjacency matrix of a simply-connected interaction network. 

The following is a sufficient but not necessary condition for projecting a simply connected, $n$-dimensional, curved hypersurface $X$ onto a flat space $S$ of dimension $d\geq n$ to be injective: If for all points $x\in X$ such that for all vectors $t\in T_x$, the tangent space of $X$ at $x$, $t$ does not belong to the null space of $S$, then projecting $X$ onto $S$ is injective. 

We show that there is no point $x$ in image of $(\sigma(\text{grad}(r)))$, such that $T_x$ contains a vector in the kernel of the combinatorial divergence operator. This will show that when we project the image of $(\sigma(\text{grad}(r)))$ onto $\mathbb{R}^m$, no two points will have the same projection. 

Because $\sigma$ is an increasing function, the partial derivative of any entry of $\text{grad}(r)$ is of the same sign as the corresponding entry in $\sigma(\text{grad}(r))$ for any point $r$. Therefore, the vectors in the tangent space of $\text{grad}$ will belong to the same orthants as the vectors in the tangent space of $\sigma(\text{grad})$.

The Hodge decomposition theorem \citep{jiang2011statistical} states that the kernel of div is an orthogonal subspace to $im(\text{grad})$. Therefore, every vector in the kernel of div belongs to a different orthant to every vector tangent to $im(\text{grad})$ and hence every vector tangent to $\sigma(im(\text{grad}))$. Therefore no vectors in $ker(\text{div})$ are tangent to $im(\text{grad})$ and $\text{div}(\sigma(\text{grad}(r)))$ is injective when $Q_{ij} = 1$. 

Here, we say that two matrices $A$ and $B$ are orthogonal to one another if $\sum_{i,j}A\odot B = 0$, that is if the sum of the element-wise product of the two matrices is zero. This is equivalent to the typical notion of the dot product in euclidean space. 

In the case where $Q_{ij}\neq 1$, then $\text{div}(\sigma(\text{grad}(r)))$ is still injective. For any element of $Q$, either $Q_{ij} \neq 0$, in which case the tangent vectors of $\sigma(Q\odot \text{grad}(r))$ stay in the same orthants, preserving injectivity, or $Q_{ij} = 0$, which projects both $\sigma(im(\text{grad}))$ and $ker(\text{div})$ onto a lower dimensional subspace by removing certain coordinates of $\mathbb{R}^{m \choose 2}$. Since this can only move vectors from one orthant to the boundary between two orthants, the tangent vectors of $\sigma(im(\text{grad}))$ will still belong to different orthants than $ker(\text{div})$ and injectivity will still be preserved. 

We only preserve injectivity when there are enough non-zero entries of $Q$. If enough entries of $Q$ are zero then we will be projecting a $(m-1)$-dimensional hypersurface onto a subspace of lower dimension, which cannot be injective. 

In this case, "enough" means that $Q$ is the weighted adjacency matrix on $m$ vertices (the number of players with Elo ratings) such that this weighed adjacency matrix has one strongly connected component. In the case where too many entries of $Q$ are zero, then this corresponds to the case where we divide our population of players into multiple groups and prevent any player from players someone from a different group. In this case we would not be able to do any comparisons of players between groups. 
\end{proof}

If there was no sigmoid function present in \autoref{eqn: stability equation}, then this would be the same problem that was solved by in \cite{jiang2011statistical}. However, the image of $(\sigma(\text{grad}(r)))$ has non-zero curvature and the techniques used in \cite{jiang2011statistical} do not apply here.

\section{A Measurement of Intransitivity}

Now that we have concluded the main theoretical results of this paper, we describe a potential application of these results in measuring the degree of intransitivity present in a game.  

In \autoref{section:MultipleFinalScores}, we show that the set of final Elo scores for an intransitive game is dependent on the selection matrix $Q$. We call the set of final scores the {\em Elotope} of a game. Every selection matrix $Q$ corresponds to a point in the Elotope. 

In the case where a sub-matrix of the advantage matrix $A$ is a STACM (corresponding to transitive play), then different selection matrices can correspond to the same point in an Elotope. Thus the degree to which the Elotope stretches away from a single point (its size) gives one an intuitive understanding of the level of intransitivity present.

For instance, if the advantage matrix $A$ is a STACM, then the Elotope will be a single point. After one adds an intransitive component to STACM, the Elotope changes from a single point to a connected set that grows as more components of the advantage matrix display intransitivity. 

However, if the Elotope is distant from the origin, its size may be less relevant in quantifying intransitivity. When the Elotope is distant there are large Elo ratings present and hence some very low and high probabilities of victory. In this case, the logistic sigmoid function means that the probabilities of victory are quite insensitive to changes in ratings. Hence, we seek a measure of intransitivity that understands when intransitivity is relevant. For instance, a set of three players may display intransitivity, but if the chance of victory of one over the other is near 1 or 0, then the intransitivity will not have an appreciable effect on which players win the majority of games. 

Based on these observations, we define a measure of intransitivity $I(P)$, based on the ratio of the Frobenius norm of the transitive component to the Frobenius norm of the cyclic component of the advantage matrix $A$: 
\begin{equation}
    \label{eqn:intransivity measure} 
    I(A) = \frac{1 + \|A - \text{grad}\circ \text{div}(A)\|}{1 + \|\text{grad}\circ \text{div}(A)\|}.
\end{equation}

The norm of the transitive component encodes how far away the Elotope is from the origin. If no intransitivity were present, then the Elotope would be a single point. As this point moves further away from the origin, the difference in final scores between players increases, as does the transitive component of $A$. The cyclic component of $A$ encodes the size of the Elotope. As the size of the cyclic component of $A$ increases, there are gradually more and more possible values that final Elo ratings can take. Because of these facts, the ratio of the cyclic and transitive components of the advantage matrix seems like a logical choice for a measurement of how intransitivity affects the long term behaviour of a game. We add 1 to the denominator to avoid dividing by zero, in the event that there is no transitive component of $A$. 

If $I(A)$ is below $1$, then this means that the transitive component of $A$ is larger than the cyclic component of $A$ and we say that the advantage matrix is \emph{predominantly transitive}. Similarly, if $I(A)$ is above $1$, then we say it is \emph{effectively intransitive}.


Our intransitivity measure -- \autoref{eqn:intransivity measure} -- applies the results in our paper, but the method requires a large amount of data for accurate measures. To calculate the cyclic and transitive components of a matrix we need to estimate the advantage matrix $A$, \ie the probability of victories between every pair of players. One of the main uses of the Elo rating system is to estimate these probabilities with much sparser data because complete datasets are rare. For instance, Elo can estimate the probability of victory between players who have never played. Note though it seems likely that no approach can circumvent the need for a large dataset because a measure of intransitivity cannot be based on the assumption of transitivity. We cannot use an approach like Elo because this would create a circular chain of reasoning.

This large data cost makes the approach hard to use directly on typical competition data, but we can apply it to the analysis of players and games, as we do below.

\subsection{Validation}
We demonstrate our measurement on an intuitive problem. We will consider three players who are playing Rock-Paper-Scissors amongst themselves. This game is the standard example of an intransitive game, but note that our players will not play so-called pure strategies. They will choose amongst options according to probabilistic strategies, and depending on the choice of players' strategies we can control the degree of transitivity. 

We consider two situations. First, the three players can only choose between Rock and Scissors. In this case, we would expect the advantage matrix to be predominantly transitive. In the second situation, the three players can choose between Rock, paper, or Scissors. In this case, we can observe advantage matrices with large cyclic components and thus a larger measurement of intransitivity, but we can now quantify when this results in an effectively intransitive game. 

\begin{figure}
    \centering
    \includegraphics[width=0.5\linewidth, trim={4cm 16cm 4cm 4cm},clip]{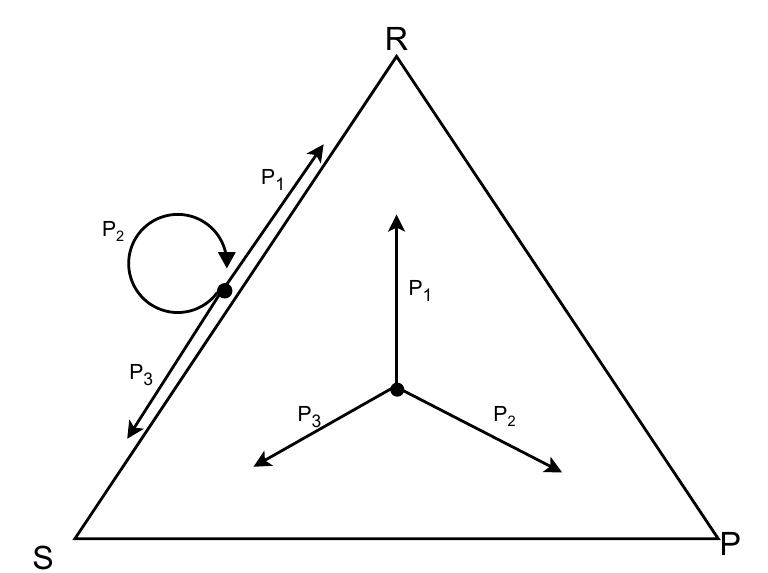}
    \caption{The strategy space of Rock-Paper-Scissors represented as a simplex. This diagram shows us the two situations considered in our experimental validation. First, we study a predominantly transitive game where the players are restricted to the edge of the simplex between Rock and Scissors. We then study the cyclic game in which Players 1,2, and 3 begin in the centre of the simplex and then move out towards its different vertices.}
    \label{fig:triangleRPS}
\end{figure}

\autoref{fig:triangleRPS} demonstrates our experimental framework visually. We represent the strategy space of Rock-Paper-Scissors as an equilateral triangle. In the first situation -- the Rock-Scissors situation -- Players 1,2, and 3 have strategies shown on the left edge of the triangle. We look at the measurement of intransitivity as Players 1 and 3 play Rock and Scissors with increasing frequency. In this situation, we would expect our measurements to show predominant transitivity. The game is not perfectly transitive because the probability of Player 1 defeating 3, is not consistent with the probabilities 1 v 2 and 2 v 3, but the direction of likely victory is transitively consistent so there is a small, but non-zero metric. 
More formally, the advantage matrix is not a STACM. 

This highlights the fact that even when intransitivity is not intuitively evident in a game (\ie we don't have a Rock-Paper-Scissors situation that obviously creates a discrepancy), intransitivity can still easily be present in the detailed victory probabilities. Hence the need for a measure of intransitivity. 

The second situation involves players starting with Rock, paper and Scissors with equal probability, and then Players 1, 2, and 3 play Rock, Paper, and Scissors, respectively, with increasing frequency. In this situation, we would expect our measurements to show increasing intransitivity as the players approach the more pure strategies. 

For these two situations, the players' strategies are described by Probability Mass Function (PMFs) over the strategy space (the three choices). From these different strategy PMFs, we can calculate the advantage matrix --- entry $A_{ij}$ is the logit function of the probability that player $i$ beats player $j$. This gives us the ground-truth intransitivity measurement associated with an advantage matrix. 
Specifically, the advantage matrix given player PMFs is given by the equation
\begin{equation}
    \label{eqn:advantagematrixRPS}
    A = \sigma^{-1}\Bigg(B^T\begin{bmatrix}
        0.5& 0 & 1\\1&0.5&0\\0&1&0.5
    \end{bmatrix}B\Bigg) 
\end{equation}
where $B$ is the $3\times m$ matrix whose $i$th column is the PMF of Player $i$ playing Rock, paper or Scissors, respectively. 

As well as the ground-truth intransitivity measurement, we simulate a sequence of games of Rock-Paper-Scissors between our three players. In our simulations, if there is a draw between two players, then a winner is chosen using a fair coin. This simplifies our analysis because victory can be treated as a Bernoulli random variable. Hence, in terms of rewards, this is equivalent to awarding a draw over a large set of games.  

From these simulated games, we can obtain an empirical advantage matrix, and thus an empirical measurement of intransitivity. 

\subsubsection{Rock and Scissors}

We first consider three players who are only allowed to play Rock or Scissors. We examine the cases where the players' PMFs are given by
\begin{equation}
B = 
    (1-t)\begin{bmatrix}
    1/2&1/2&1/2&\\
    0&0&0\\
    1/2&1/2&1/2
\end{bmatrix}
+ t
\begin{bmatrix}
    1&1/2&-1&\\
    0&0&0\\
    -1&1/2&1
\end{bmatrix},
\label{eq:RSPMFs}
\end{equation}
where $t$ is a parameter that ranges from 0 to 1. Intuitively, we can use $t$ interpolate between the case where all three play the same (random) strategy, and the case where Players 1 and 3 adopt opposing pure strategies. 

We focus our investigation on three cases in this space: 
\begin{enumerate}
    \item $t=0.0$: All players play Rock or Scissors with equal probability. 
    \item $t=0.5$: Player 1 plays Rock with probability 0.75, Player 2 plays Rock and Scissors with equal probability, and Player 3 plays Rock with probability 0.25 .  
    \item  $t = 0.8$: Player 1 plays Rock with probability 0.9, Player 2 players Rock and Scissors with equal probability, and Player 3 plays Rock with probability 0.1. 
\end{enumerate}

The purpose of these three cases is to look at how the measurement of intransitivity behaves on a game that is predominantly transitive, starting from a trivial case, but then the players' strategies become more extreme. This allows us to examine \autoref{eqn:intransivity measure}, in a variety of contexts and evaluate against several sanity tests. 

In Case 1, all three players play with the exact same mixed strategies. This represents the case where the player's PMFs have the largest possible entropy. In this case, all players are just as likely to win against one another, therefore the corresponding advantage matrix is the zero matrix, with zero transitive and cyclic components. Therefore the intransitivity measurement will be exactly one. This case is shown in \autoref{fig:transitivesim} in red, with the ground truth shown by the dashed line, and the empirical estimates from match outcomes shown as the solid line. 

In Case 2, Players 1 and 3 start to favour Rock and Scissors more of the time. The Shannon entropy of Players 1 and 3 decreases to $H \simeq 0.56$. In this case, the transitive component of the advantage matrix increases more than the cyclic component increases. This case is shown in \autoref{fig:transitivesim} in blue. 

In Case 3, Players 1 and 3 start to favour Rock and Scissors almost all of the time. The Shannon entropy of Players 1 and 3 decreases to $H \simeq 0.056$. This case is shown in \autoref{fig:transitivesim} in green. 

Note that in all of these cases, the measure of intransitivity is small (below one) corresponding to games that are predominantly transitive. Note, however, that as the strategies become more extreme the metric decreases, with most of the decrease occurring by the $t=0.5$ case. That is because when the players adopt extreme strategies their probability of victory approaches 0 or 1, and so the {\em effective} intransitivity decreases. 

Note also that the measure converges to the ground truth relatively quickly. A substantial number of games are required for strong convergence (more than 1000 per pair) but we get a very clear indication much, much earlier. 

\begin{figure}
    \centering
    \includegraphics[width=0.8\linewidth]{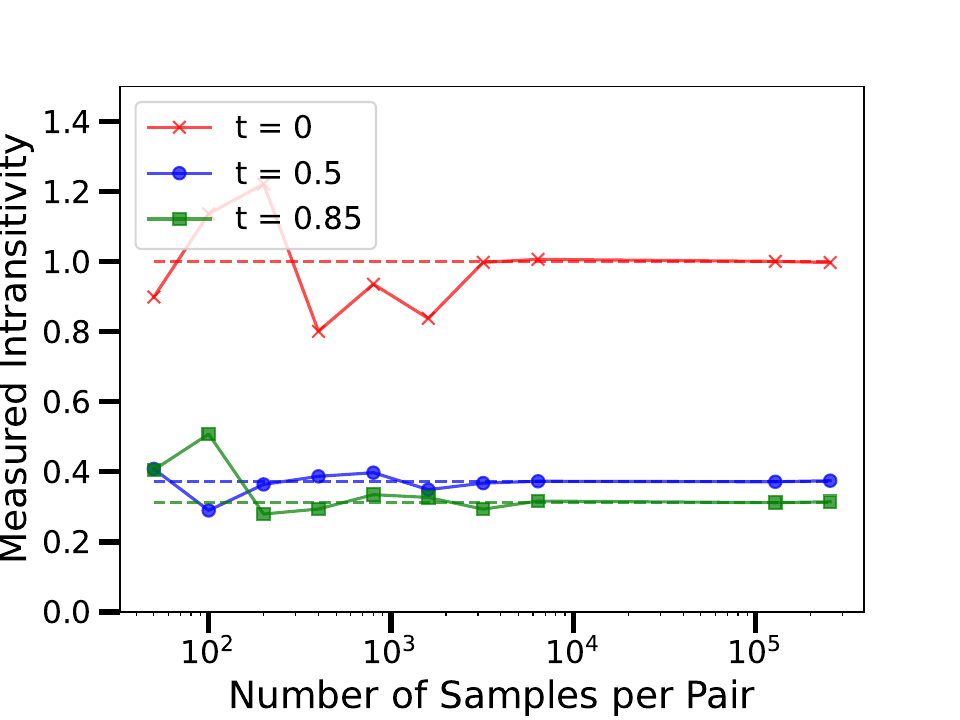}
    \caption{Empirical and ground truth measurements of intransitivity when players can play Rock or Scissors. We can see that as the size of the datasets increases, the measured intransitivity approaches the ground truth value. We consider three cases where the players choose their strategies with different probability mass functions. We can see that in this setting, for PMFs with both low and high Shannon entropy, the measured intransitivity is relatively low.}
    \label{fig:transitivesim}
\end{figure}

To accompany our empirically measured intransitivity values, we calculate the intransitivity measures directly from the ground truth for a wider range of cases. \autoref{fig:transitivitymeasureRockScissors} shows the intransitivy measure as a function of $t$. We see that for all values the games are predominantly transitive, but that there is a range of values that might not be intuitively obvious. They relate to the degree to which the intransitively impacts the game, reaching a minimum value near $t = 0.9$. 

\begin{figure}[htp!]
    \centering
    \includegraphics[width=0.7\linewidth]{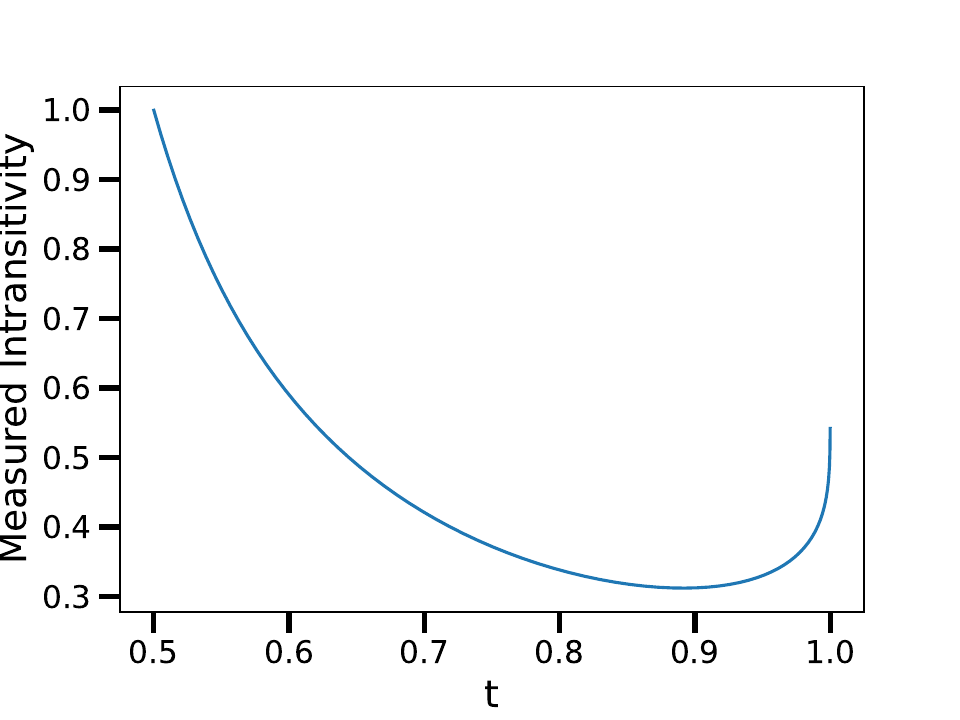}
    \caption{Intransitivity for the Rock-Scissors game: for all values of $t$ the intransitivity is below one, which means the transitive component of the advantage matrix is larger than the cyclic component. We can see that the measured intransitivity starts to increase for larger values of $t$. This is because even though the Rock-Scissors game is predominantly transitive, it is not completely transitive in the sense discussed in \autoref{section:MultipleFinalScores}.}
    \label{fig:transitivitymeasureRockScissors}
\end{figure}

\begin{figure}[htp!]
    \centering
    \includegraphics[width=0.7\linewidth]{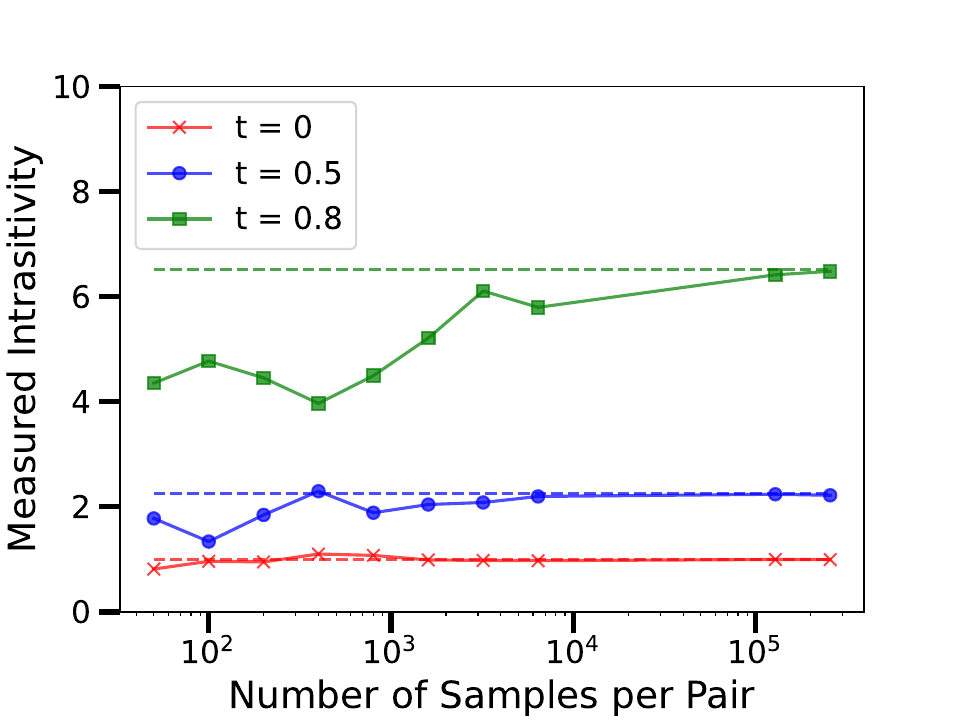}
    \caption{Empirical and ground truth measurements of intransitivity when players can play Rock, paper or Scissors. Note that as the size of the datasets increases, the measured intransitivity approaches the ground truth value. As the purity of the players' strategies increases (they start favouring either Rock, paper, or Scissors), the measured intransitivity increases. }
    \label{fig:intransitivesim}
\end{figure}

\subsubsection{Rock, Paper, and Scissors}

Now we consider three players each of which will play Rock, paper or Scissors where the probability mass functions are given by 
\begin{equation}
  B =   (1-t)
  \begin{bmatrix}
    1/3&1/3&1/3&\\
    1/3&1/3&1/3\\
    1/3&1/3&1/3
\end{bmatrix}
+ t 
\begin{bmatrix}
    1&0&0&\\
    0&1&0\\
    0&0&1
\end{bmatrix},
\label{eq:RSPMFs}
\end{equation}
where $t$ again ranges from 0 to 1. Again, we look at three cases:
\begin{enumerate}
    \item $t = 0.0$: All players play Rock, paper, or Scissors with equal probability of 1/3. 
    
    \item $t=0.5$: Player 1 plays Rock with probability 2/3 and the other two strategies with equal probability. Likewise, Players 2, and 3 choose paper and Scissors with probability 2/3 respectively and choose from their remaining two strategies with equal probability.
    
    \item $t = 0.85$: Players 1, 2, and 3 choose Rock, paper, and Scissors respectively with probability 0.9 and choose equally between the remaining two strategies. 
\end{enumerate}
In these three cases, we start where every player is playing at Nash equilibrium, with all strategies being selected with equal probability. We then alter the PMFs of each player so that they favour a pure strategy. 

In Case 1, shown in \autoref{fig:intransitivesim} in red, all players are equally likely to win. The advantage matrix is the matrix of zeros and the measured intransitivity is 1, and as expected the measured intransitivity is 1. 

In Case 2, shown in \autoref{fig:intransitivesim} in blue, the players favour particular strategies, and the cyclic component of the advantage matrix increases.

In Case 3, shown in \autoref{fig:intransitivesim} in green, the players almost exclusively play one strategy, and the measure of intransitivity consequently increases dramatically. Hence the metric behaves as desired. 

With respect to convergence, we can see that more samples are required before the empirically measured intransitivity matches the ground truth value for more intransitive games. This should not come as a surprise because the players' PMFs are of low Shannon entropy, so we do not gain much information with each simulated game. However, we still obtain a strong, early indication of intransitivity from relatively few contests. 

Accompanying our empirical measurements is \autoref{fig:intransitivitymeasureRockpaperScissors}, which shows the calculated intransitivity measure as $t$ ranges from 0 to 1. One of the main features of this graph is that it is monotonically increasing in $t$ and for positive $t$ it gives intransitivity measurement above one. This is to be expected since, intuitively if each player played Rock, paper, or Scissors then victory between players would be intransitive. The second noticeable feature of \autoref{fig:intransitivitymeasureRockpaperScissors} is that it has a vertical asymptote at $t = 1$. This represents the case where players are playing with pure strategies, therefore the probability one player beats another is either 0 or 1, and the corresponding entry in the advantage matrix is $\infty$ or $-\infty$.  

\begin{figure}
    \centering
    \includegraphics[width=0.8\linewidth]{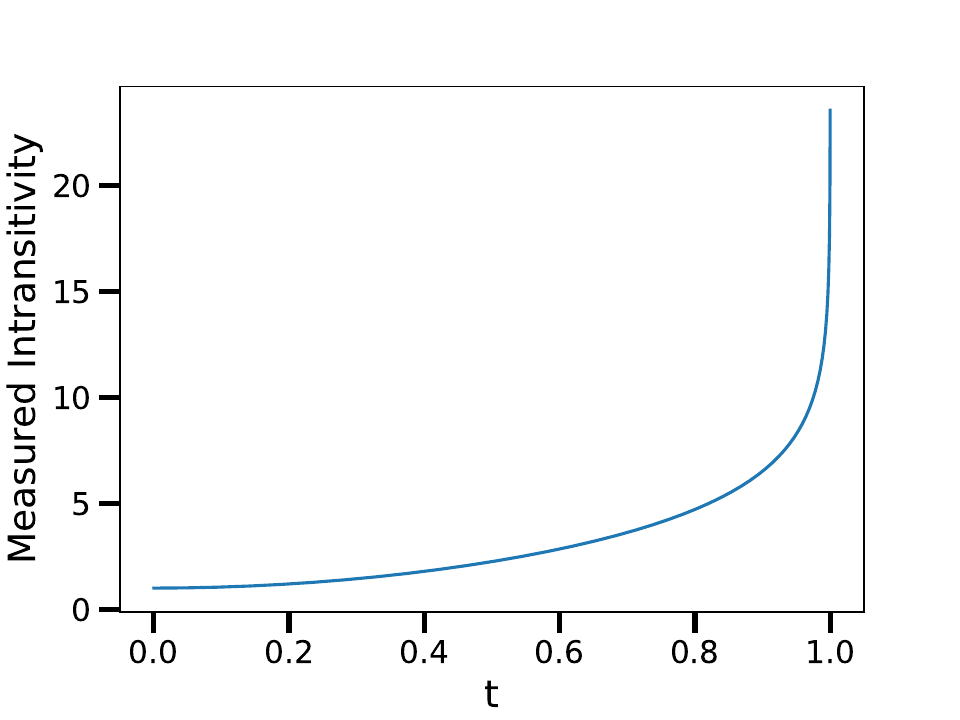}
    \caption{The calculation of measured intransitivity for the Rock-Paper-Scissors game, where $t$ ranges from 0 to 1. The intransitivity measure is above 1 for all values of $t$ indicating a predominantly intransitive game.}
    \label{fig:intransitivitymeasureRockpaperScissors}
\end{figure}

This demonstration shows that our measurement of intransitivity achieves its goal. When the game is predominantly transitive, the measured intransitivity is one or less, and when the advantage matrix has a large cyclic component the measured intransitivity will exceed one, increasing in response to increases in the scale of intransitivity. 

In the Rock-Scissors games, the estimated intransitivity measures converged to the ground truth values quicker than their intransitive counterparts. This was because, in the transitive case, the advantage matrix is a lower-dimensional object and different data points relating to games between different players all tell a similar tale. In the intransitive case, the advantage matrix belongs to a higher dimensional space and every entry of the advantage matrix does not relate to the other entries as much. 

\section{Conclusion}
This paper considers the long-term behaviour of the Elo rating system in the presence of intransitivity. We demonstrated that notion weaker than the usual Markovian stationarity arguments provides a useful notion of the long-term behaviour of the system.

We were able to show that in the presence of intransitivity Elo can still provide unique solutions. This helps establish that the Elo rating system still ``behaves'' even when the assumption of a strongly transitive advantage matrix does not completely hold.
However, the final Elo score depends on the selection matrix, with different selections leading to different final Elo scores. 

Finally, we conclude by discussing a method of quantifying intransitivity in a game. Further questions lie in making this approach more practical and in determining how one might avoid the issues of match dependent final scores.






\begin{acks}[Acknowledgments]
The authors would like to thank the anonymous referees, an Associate
Editor and the Editor for their constructive comments that improved the
quality of this paper.
\end{acks}

\begin{funding}
The first author was supported by a commonwealth HDR student stipend and a top up scholarship from Boeing Defence Australia. 
\end{funding}

\bibliographystyle{imsart-number} 
\bibliography{ref}       

\end{document}